\documentclass[11pt]{amsart}
\usepackage{color}

\usepackage[left=3.5cm,top=3.5cm,right=3.5cm,bottom=3.5cm]{geometry}

\linespread{1.3}

\usepackage[utf8]{inputenc}

\newtheorem{thm}{Theorem}
\newtheorem{col}{Corollary}
\newtheorem{lem}{Lemma}
\newtheorem{pr}{Proposition}
\theoremstyle{remark}
\newtheorem{rem}{Remark}
\newtheorem{example}{Example}

\def\R{\mathbb{R}}

\def\D{\mathcal{D}}
\def\N{\mathbb{N}}
\def\E{\mathbb{E}}
\def\norm#1{\|#1\|}
\def\ilsk#1#2{\left<#1 , #2\right>}

\def\1{1{\hskip -2.5 pt}\hbox{\textup{l}}}

\def\sgn{\operatorname{sgn}}

\begin{document}

\title{Model of phenotypic evolution in hermaphroditic populations}
\author[R. Rudnicki]{Ryszard Rudnicki}
\author[P. Zwole\'nski]{Pawe{\l} Zwole{\'n}ski}
\address{Institute of Mathematics,
Polish Academy of Sciences,
Bankowa 14, 40-007 Katowice, Poland.}
\email{rudnicki@us.edu.pl}
\email{pawel.zwolenski@gmail.com}
\thanks{This research was partially supported
by the State Committee for
Scientific Research (Poland) Grant No.~N~N201~608240 (RR)}
\keywords{measure valued process, phenotypic evolution, sexual population model, nonlinear transport equation,
asymptotic stability}
\subjclass[2010]
{Primary 47J35:  Secondary: 34G20, 60K35, 92D15}  
\begin{abstract}
We consider an individual based model of phenotypic evolution in herma\-phro\-ditic populations
which includes random and assortative 
mating of individuals. 
By increasing the number of individuals to infinity
we obtain a nonlinear transport equation, 
which describes the evolution of distribution of phenotypic traits. 
Existence of an one-dimensional attractor is proved 
and the formula for the density of phenotypic traits in the limiting (asymptotic) population 
is derived in some particular case.
\end{abstract}
\maketitle

\section{Introduction}
In this paper we study an evolution of the phenotypic traits in the hermaphroditic populations, 
i.e. the populations in which every individual has both male and female reproductive system. 
A great part of that kind of populations are characterized by the formation 
of various defense mechanisms against self-fertilization (\textit{autogamy}) 
to guarantee the genetic diversification 
(e.g. the proper shape of the flower can inhibit a self-pollination in some species of plants). 
In that case individuals can only mate with others to copulate and cross-fertilize. 
Nonetheless, some of the hermaphroditic populations have an ability 
of both mating/cross-fertilization and the self-fertilization. 
In the present paper we are interested in these both types of populations: with or without self-fertilization. 
The hermaphroditic populations are plentiful among both water or terrestrial animals and plants, 
and just to set an example we can mention Sponge (\textit{Porifera}), \textit{Turbelleria}, 
Cestoda (\textit{Cestoidea}), \textit{Lumbricidae}, some of the mollusks such as sea slug Blue Dragon (\textit{Glaucus atlanticus}) and various kinds of land snails or majority of flowering plants (angiosperms). 
 
Lots of individual based models for asexual populations were studied as the microscopic probabilistic 
descriptions of the evolution of the individuals' traits 
and the macroscopic approximations of them were derived in the forms of deterministic processes or superprocesses 
(see \cite{champagnat2,champagnat,fournier_meleard,ferriere_tran,meleard_tran}). 
In this paper we formulate an individual based model to describe the phenotypic evolution in the hermaphroditic populations. 
Precisely, we consider a large population of small individuals characterized by their phenotypic traits, 
which are assumed to be independent of spacial location and unchanged during the lifetime. 
The skin color, the shape of a leaf and the pattern of a shell constitute examples 
of the phenotypic traits that might be of interest of us. All the individuals are capable of mating with others 
or self-fertilizing to give birth to an offspring. 
We consider a general model of mating which includes both random and assortative mating.
The first particular case is a semi-random mating model which is based on the assumption that each individual has an initial capability $p(x)$ of mating depending  on 
its phenotypic trait $x$. This mating model is similar to models describing aggregation processes in phytoplankton dynamics
(see \cite{arino_rudnicki,rudnicki_wieczorek,rudnicki_wieczorek2}).
The second  particular case is an assortative mating model, when individuals with similar
phenotypic traits mate more often than they would choose a partner randomly. 
We adapt to  hermaphroditic populations  the model based on  a preference function 
 \cite{DBLD,GB,MGG,PB,SB,SP},
which is usually used in two-sex populations. 
The consequence of mating or self-fertilization is a birth of a new individual with a phenotypic trait
given by a random variable, which depends only on the phenotypic traits of the parents.
Moreover, each individual can die naturally or due to a competition with other individuals. 
We consider a continuous time model and assume that all above-mentioned events happen randomly. 

The model that we study here is a hermaphroditic analogue of the asexual model introduced 
by Bolker and Pacala \cite{bolker_pacala}, Dieckmann and Law \cite{dieckmann_law} 
and studied by Fournier and 
M\'el\'eard \cite{fournier_meleard}. 
Although the literature connected with the individual based models and the macroscopic approximations is vast, 
at our knowledge no similar model, involving the mating process, has been considered yet. 

The another aim of the paper is to study macroscopic deterministic approximation of the model, 
obtained by increasing the number of the individuals in population to infinity, with simultaneous decrease
in the mass of each single individual.
After suitable scaling of the parameters of the model the limit passage leads 
to an integro-differential equation, 
whose solutions describe the evolution of the distribution of the phenotypic traits. 
Furthermore, not only do we study existence, uniqueness of the solutions 
but also investigate 
 extinction and  persistence of the population and  convergence of its size  to some stable level.

The main aim of our paper is to prove the asymptotic stability of the distribution of phenotypic traits. 
It turns out that the asymptotic behavior of the solutions 
is characterized by the conservation of the first moment of the phenotypic distribution. 
We apply our general result to two specific models
when the phenotypic trait 
of the offspring is the mean of the phenotypic traits of parents randomly perturbed  by some external environmental effects or genetic mutations.
The noise in the first model is  additive. In this case we are also able to derive the formula for the phenotypic distribution of the limiting population.
The second model corresponds to multiplicative noise. 
A similar model describes the density distribution function
of the particles' energy, and it is known as the Tjon--Wu version 
of classical Boltzmann equation  (see \cite{boby, krook, tjon}). 
As a by-product of our investigation we give a simple proof of the theorem of
Lasota and Traple (see \cite{lasota, lastrap}) concerning 
  asymptotic stability of this equation.  
 We also give an example of the situation when in a long period of time all phenotypic traits
in the population reduce to one particular phenotypic trait, which is the mean trait of the initial population. 

The scheme of the paper is following. In Section \ref{r_def} we collect the assumptions concerning the dynamics of the population.
In Section \ref{r_mod} we introduce a stochastic process corresponding to our individual based model and study  its properties. 
Section \ref{r_app} is devoted to the macroscopic  approximation,  the limiting equation and its solutions.
In particular, we  give some simple results concerning 
extinction and persistence of the population and stabilization of its size.
Section~\ref{s:as} contains some results concerning the asymptotic stability
and examples of applications.
In the last section we discuss some  problems for future investigation  concerning  
assortative mating models.

\section{Individual-based model}
\label{r_def}
Let us fix a positive integer $d$. We assume that every individual is described by 
a phenotypic trait $x$ which belongs to some closed and connected subset $F$ of $\R^d$, 
whose interior is nonempty. The phenotypic trait of an individual does not depend
on the spatial location and does not change during a lifetime.

\subsection{Random mating.} 
\label{ss:rm}
In sexually reproducing populations the mating process highly depends on a given species. 
We will consider both random and assortative mating.  
In classical genetics individuals mate randomly --- the choice of partner is not influenced by the phenotypic traits
(\textit{panmixia}). Random mating better describes plants than animals, but it is also observed 
in some animal hermaphroditic populations  \cite{Baur}.  
We study a semi-random mating model in which the mating rate depends on the phenotypic trait.
An individual described by the phenotypic trait $x$ is capable 
of mating/self-fertilizing with the rate $p(x)$, where $p$ is a positive function 
of the phenotypic trait.

Consider a population which consists of  $n$  individuals with phenotypic traits $x_1,\dots,x_n$. 
Since two different individuals can have the same 
phenotypic trait, it is useful to describe the  state of the population as the
multiset
\[
\mathbf x=\{x_1,\dots,x_n\}.
\]     
We recall that a \textit{multiset} 
(or \textit{bag}) is a generalization of the notion of set in which members are allowed to appear more than once.
We suppose that an individual can mate with individual with phenotypic trait $x_j$ with the following probability 
$$\frac{p(x_j)}{\sum_{l=1}^n p(x_l)}.$$
Thus the mating rate of the individuals characterized by phenotypic traits $x_i$ and $x_j$ is given by
\begin{equation}
\label{matingop1}
m(x_i,x_j;\mathbf x)=\frac{p(x_i)p(x_j)}{\sum_{l=1}^n p(x_l)}.
\end{equation}
The figure $m(x_i,x_i;\mathbf x)$ is the self-fertilization rate. In the case of populations without self-fertilization we posit that
\begin{equation}
\label{matingop2}
m(x_i,x_j;\mathbf x)=\frac{p(x_i)p(x_j)}{2}\bigg(\frac{1}{\sum_{l\neq i} p(x_l)}+\frac{1}{\sum_{l\neq j} p(x_l)}\bigg)
\end{equation}
if $i\ne j$ and $m(x_i,x_i;\mathbf x)=0$. 
Let us observe that in both cases the mating rate is a symmetric function of $x_i$ and $x_j$ but only
in the first case we have  $\sum_{j=1}^{n}m(x_i,x_j;\mathbf x)=p(x_i)$.
If we  pass with the number of individuals to infinity and replace the discrete model by the infinitesimal model 
 with the phenotypic trait distribution described by a continuous measure $\mu$, then
 the mating rate in both cases is given by
\begin{equation}
\label{mating-rd}
m(x,y;\mu)=\frac{p(x)p(y)}{\int_F p(z)\mu(dz)}.
\end{equation}
 

\subsection{Assortative mating.} 
\label{ss:am}
Now we consider models with assortative mating,  i.e.  when individuals with similar phenotypic traits mate more often than they would choose a partner randomly.
Assortative  mating can be modeled in different ways. For example 
one can use the  matching theory, which is based on preference lists, whereby each participant
ranks all potential partners according to its
preferences and attempts to pair with the highest-ranking
partner \cite{AA,PBG}. 
Such models are very interesting but difficult to analyze.
The most popular  models of assortative mating are based on the assumption that 
 a random encounter between two individuals with phenotypic traits $x$ and $y$ depends
on a preference function  $a(x,y)$ \cite{DBLD,GB,MGG,PB,SB,SP}. We consider only the case when all the individuals have the same initial capability of mating $p(x)=1$.

Usually, it is assumed  that $a(x,y)=\varphi(\|x-y\| )$, where 
$\varphi: [0,\infty)\to [0,\infty)$ is a continuous  decreasing function.
It means that if a population  consists of  $n$  individuals with phenotypic traits $x_1,\dots,x_n$, 
then an individual with phenotypic trait $x_i$ mates with an individual with phenotypic trait  $x_j$ with the rate 
\begin{equation}
\label{mating-as1}
m(x_i,x_j;\mathbf x)=\frac{a(x_i,x_j)}{\sum_{l=1}^n a(x_i,x_l)}=\frac{\varphi(\|x_i-x_j\|)}{\sum_{l=1}^n \varphi(\|x_i-x_l\|)}.
\end{equation}
Note that in general, the function $m$ is not symmetric in $x_i$ and $x_j$, 
and usually it describes mating in a two-sex population. Then
the first argument in $m$ refers to the female.
Females are assumed to mate only once, whereas males may
participate in multiple matings. We have $\sum_{j=1}^n m(x_i,x_j;\mathbf x)=1$ for each $i$,  which  means that
the probability that a female succeeds in mating equals to one.
The mating rate in the infinitesimal model  is of the form
 \begin{equation}
\label{mating-asd}
m(x,y;\mu)=\frac{a(x,y)}{\int_F a(x,z)\mu(dz)}.
\end{equation}

While considering hermaphroditic populations, one can expect a model with a symmetric mating rate.
One possible solution is to assume that the mating rate is of the form
\begin{equation}
\label{mating-as2}
m(x_i,x_j;\mathbf x)=\frac{a(x_i,x_j)}{2\sum_{l=1}^n a(x_i,x_l)}+\frac{a(x_i,x_j)}{2\sum_{l=1}^n a(x_j,x_l)},
\end{equation}
where $a(x,y)$ is a symmetric nonnegative preference function, e.g.  $a(x,y)=\varphi(\|x-y\|)$
(in  the case of populations without self-fertilization we eliminate  from  the denominators terms with $i=l$ and $j=l$).
The mating rate in the infinitesimal model is now of the form
 \begin{equation}
\label{mating-asd2}
m(x,y;\mu)=\frac{a(x,y)}{2\int_F a(x,z)\mu(dz)}+\frac{a(x,y)}{2\int_F a(y,z)\mu(dz)}.
\end{equation}

\medskip

In the rest of the paper we will assume that the mating rate $m(x_i, x_j; \textbf x),$ of the individuals with phenotypic traits $x_i$ and $x_j$, is of the form (\ref{matingop1}) or (\ref{mating-as2}).

\subsection{Birth of a new individual.} 
\label{ss:birth}
After the mating/self-fertilization an offspring is born with the rate $1$. 
The phenotypic trait of the offspring is drawn from a distribution $K(x_i,x_j,dz)$,
 where $x_i$ and $x_j$ are the traits of the parents.
We suppose that for every $x,y\in F$ the measure $K(x,y,\cdot)$ is a Borel probability measure with support contained in the set $F$,
and assume that there exist positive constants $c_1,c_2,c_3$
such that
\begin{equation}
\label{srednia1}
\int_F |z| K(x,y,dz)\le c_1+c_2|x|+c_3|y|,
\end{equation}
and
\begin{equation}
\label{srednia2}
\int_F z K(x,y,dz)=\frac{x+y}{2}.
\end{equation}
The above condition has simple biological interpretation, 
namely, the mean value  of an offspring's phenotypic trait is the mean 
 of phenotypic traits of its parents. Moreover, we suppose that for every $x,y\in F$ and for every Borel subset $A$ of the set $F$
\begin{equation}
K(x,y,A)=K(y,x,A),
\end{equation}
and the function
\begin{equation}\label{Kmierzalne}
(x,y)\mapsto K(x,y,A)
\end{equation} is measurable.

\subsection{Competition and death rates.}
\label{ss:cd}
Any individual from the population can die naturally or due to loosing an intra-specific competition with other individuals.
 Let us denote by $I(x_i)$ the rate of interaction of the individual with phenotypic trait $x_i$. 
We assume that $I$ is a nonnegative function of phenotypic trait. For the individuals with phenotypic traits $x_i$ and $x_j$ we define a competition kernel $U(x_i,x_j),$
which is assumed to be a nonnegative and symmetric function.
The competition of any two individuals always ends with death of one of the competitors.

We assume that the natural death rate of the individual with phenotypic trait $x_i$ is expressed by 
the number $D(x_i)$, and suppose that $D$ is nonnegative function of phenotypic trait.

\section{Stochastic process corresponding to the model}
\label{r_mod} 
\subsection{The dynamics of the population}
\label{ss:dp}
Now we present the dynamics of the ecological system that we are interested in. 
The process starts at time $t=0$ from an initial distribution. Individuals with phenotypic traits $x_i$ and $x_j$ 
can mate at the random time, which is exponentially distributed with 
the parameter $m(x_i,x_j;\mathbf x)$, which is of the form (\ref{matingop1})  or (\ref{mating-as2}),
and almost surely after the mating an offspring is born.
The death can occur in two independent ways: the random time of natural death 
of the individual with phenotypic trait $x_i$ is exponentially distributed with the parameter $D(x_i)$; 
the time of competition-caused death is drawn from exponential distribution 
with the parameter $I(x_i)\sum_{j}U(x_i,X_j(t)),$ where the sum extends over all living individuals at time $t$ and $X_j(t)$ 
denotes the phenotypic traits of those individuals at that moment. We assume that all above-mentioned random times 
(mating, natural death, competition death) are mutually independent.

\subsection{The phase space}
\label{ss:p-s}
 By $\N$ we denote the set of all positive integers, 
$\delta_x$ stands for a Dirac measure concentrated at the point $x,$ and $\1_A$ 
denotes a characteristic function of a measurable set $A$. 
We consider a set $M(F)$ of all finite positive Borel measures on the set $F$ equipped with the topology of weak convergence of measures,
 and introduce the subfamily $\mathcal{M}$ of $M(F)$ of the form
\begin{equation}
\mathcal{M}=\bigg\{\sum_{i=1}^n \delta_{x_i}\colon \, n\in\N,\, x_i\in F\bigg\}.
\end{equation}
For any measure $\mu\in\mathcal{M}$ and any measurable function $f$ we define 
$\ilsk{\mu}{f}=\int_F f\, d\mu $, i.e.   $\ilsk{\mu}{f} =\sum_{i=1}^n  f(x_i)$   if  $\mu= \sum_{i=1}^n \delta_{x_i}$. We write $\D([0,\infty), \mathcal{M})$ for the Skorokhod space of all cad-lag functions 
from the interval $[0,\infty)$ to the set $\mathcal{M}$ (see for details, e.g., \cite{kurtz, skorokhod}). 

\subsection{Generator of the process}
\label{ss:g-p}
We consider a continuous time $\mathcal{M}$-valued stochastic process $(\nu_t)_{t\geq 0}$ 
with an infinitesimal generator $L$ given for all bounded and 
measurable functions $\phi:\mathcal{M}\to\R$ by the formula
\begin{equation}
\label{operatorL}
\begin{aligned}
L\phi (\nu)=\int_F \int_F & \int_{F}[\phi(\nu +\delta_{z})-\phi(\nu)]m(x,y;\nu)K(x,y,dz)\nu(dx)\nu(dy)\\
&{}+\int_F [\phi(\nu -\delta_{x})-\phi(\nu)]\bigg(D(x)+I(x)\int_{F}U(x,y)\nu(dy)\bigg)\nu(dx).
\end{aligned}
\end{equation} 
The first term in right-hand side describes the mating and birth processes with dispersal of the phenotypic traits of an offspring. 
The second term stands for two kinds of deaths. The death part in above-mentioned generator 
was previously studied in \cite{fournier_meleard}. Notice that, unlike the generator appearing in \cite{fournier_meleard}, 
the operator $L$ of the form (\ref{operatorL}) is nonlinear in $\nu$ in the first term. 

To ensure the existence and uniqueness of the Markov process generated by  operator (\ref{operatorL}), we assume that $K(x,y,dz)$ is absolutely continuous with respect to the Lebesgue measure, i.e.  
\begin{equation}
\label{gestoscK}
K(x,y,dz)=k(x,y,z)\,dz,
\end{equation}
where $k$ is probability density function such that $k(x,y,\cdot)=k(y,x,\cdot)$, and there exists a constant $C>0$ and a probability density function $\overline k$ such that
\begin{equation}\label{zalK}
k\left(x,y,z-\frac{x+y}{2}\right)\leq C \overline k(z).
\end{equation} 
Moreover, we assume that there are positive constants $\underline a, \overline a, \overline D, \overline I, \overline U$ such that for every $x,y\in F$
\begin{equation}\label{upperbound}
\underline a\leq a(x,y)\leq \overline a, \quad p(x)\leq\overline p, \quad D(x)\leq\overline D, \quad I(x)\leq\overline I, \quad U(x,y)\leq\overline U.
\end{equation}
Under the above assumptions, if the initial measure $\nu_0\in\mathcal{M}$ satisfies $\E\big(\ilsk{\nu_0}{1}^q\big)<\infty$ for some number
$q\geq 1$, then $$\E\Big(\sup_{0\leq t\leq T}\ilsk{\nu_t}{1}^q\Big)<\infty$$ for any $T<\infty$, and, consequently, standard approach of Fournier and M\'el\'eard can be easily adapted to prove the existence of the Markov chain $(\nu_t)_{t\geq 0}$ with the infinitesimal generator given by formula (\ref{operatorL}) (see \cite{fournier_meleard} for detailed proofs).

\section{Macroscopic model}
\label{r_app}
\subsection{Macroscopic approximation}\label{ss:m-a}
This section contains an approximation of the process which was introduced and studied in the previous sections. 
The idea is to normalize the initial model and pass with the number of individuals to infinity, 
assuming that the "mass" of each individual becomes negligible. 
The approximation covers a case in which the rates of mating and death are unchanged. 
Only the intensity of interaction is rescaled, and tends to $0$ with the unbounded growth of the population. 
This approach leads to a deterministic nonlinear integro-differential equation 
whose solutions describe the evolution of distribution of the phenotypic trait in the population.

We consider a sequence of populations indexed by the number $N\in\N$. In the $N$-th population consisting of individuals $\mathbf x^N=\{x_1^N,\ldots, x_n^N\}$
\begin{itemize}
\item[(a)] individuals with phenotypic traits $x_i^N$ and $x_j^N$ can mate with a rate $m(x_i^N,x_j^N;\mathbf x^N)$, which is of the form (\ref{matingop1}) or (\ref{mating-as2}),
\item[(b)]  the new offspring's phenotypic trait is drawn from the distribution $K(x_i^N,x_j^N,dz)$, where $x_i^N, x_j^N$ are the phenotypic traits of the parents,
\item[(c)]  an individual with phenotypic trait $x_i^N$ can die with a rate $D(x_i^N)$,
\item[(d)]    an individual with phenotypic trait $x_i^N$ interacts with  other individuals with intensity $I(x_i^N)/N$,
\item[(e)] the competition kernel of individuals with phenotypic traits $x_i^N, x_j^N$ is a symmetric, nonnegative function $U(x_i^N,x_j^N)$.
\end{itemize}

The $N$-th population is described by a process $(\nu^N_t)_{t\geq 0}$ which is defined
in the same way as the process $(\nu_t)_{t\geq 0}$ but with the corresponding coefficients. We define a $\mathcal{M}^N$-valued Markov process $(\mu_t^N)_{t\geq 0}$ by a formula $\mu_t^N=\nu_t^N/N$. The value space for each process $(\mu^N_t)_{t\geq 0}$ is thus 
\[
\mathcal{M}^N=\bigg\{\frac{1}{N}\nu: \nu\in\mathcal{M}\bigg\}.
\]
Then the generator $L^N$ of the process $(\mu_t^N)_{t\geq 0}$ is given by
\begin{multline*}
L^N\phi (\nu)=N\int_F \int_F \int_{\R^d}\bigg(\phi \Big(\nu +\frac{1}{N}\delta_{z}\Big)-\phi(\nu)\bigg)m(x,y;\nu)K(x,y,dz)\,\nu(dx)\,\nu(dy)\\
+N\int_F \bigg ( \phi\Big(\nu -\frac{1}{N}\delta_{x}\Big)-\phi(\nu)\bigg)\bigg(D(x)+I(x)\int_{F}U(x,y)\,\nu(dy)\bigg)\,\nu(dx).
\end{multline*}
for any measurable and bounded map $\phi:\mathcal{M}^N\to\R$. 

We consider the case when the number of individuals tends to infinity with simultaneous decrease in each individual's mass to zero as $N\to\infty$.

\begin{thm}
\label{th-lp}
Assume that conditions $(\ref{gestoscK})$, $(\ref{zalK})$ and $(\ref{upperbound})$ hold, and moreover functions $a, p, k, D, I, U$ are continuous. If for some $q\geq 2$ the condition $\E\big(\ilsk{\mu_0^N}{1}^q\big)<\infty$ holds for all $N\in\N$, and the sequence $(\mu_0^N)_{N\in\N}$ converges 
weakly to a deterministic finite measure $\mu_0$ as $N\to\infty$,  
then for all $T>0$ the sequence $(\mu^N)_{N\in\N}$ converges in distribution in $\D([0,T],M(F))$ 
to a deterministic and continuous measure-valued function $\mu_t:[0,T]\to M(F)$, given by the formula
\begin{align}\label{rownanieslabe}
\ilsk{\mu_t}{f}={} &\ilsk{\mu_0}{f}+\int_0^t\int_F\int_F\int_{\R^d} f(z)m(x,y;\mu_s) k(x,y,z)\,dz\,\mu_s(dx)\,\mu_s(dy)\,ds
\notag\\
&-\int_0^t\int_F f(x)\Big( D(x)+I(x)\int_F U(x,y)\,\mu_s(dy) \Big) \mu_s(dx)\,ds,
\end{align}
for every bounded and measurable function $f\colon F\to\R.$
\end{thm}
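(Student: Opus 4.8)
The plan is to run the standard programme for deriving deterministic limits of interacting particle systems from their martingale problems, in the spirit of Fournier--M\'el\'eard \cite{fournier_meleard}, with the extra effort concentrated on the nonlinear mating term. I would begin with the martingale decomposition: for $f\in C_b(F)$ set $\Phi_f(\nu)=\ilsk{\nu}{f}$; the moment estimates recalled at the end of Section~\ref{r_mod} ensure $\E\big(\sup_{t\le T}\ilsk{\mu_t^N}{1}^q\big)<\infty$, so Dynkin's formula gives that
\[
M_t^{N,f}:=\ilsk{\mu_t^N}{f}-\ilsk{\mu_0^N}{f}-\int_0^t L^N\Phi_f(\mu_s^N)\,ds
\]
is a square-integrable martingale. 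Since $\Phi_f$ is linear, $\Phi_f(\nu\pm\tfrac1N\delta_x)-\Phi_f(\nu)=\pm\tfrac1N f(x)$ and the leading factor $N$ in $L^N$ is absorbed, so $L^N\Phi_f(\nu)=G_f(\nu)$ with $G_f(\nu)=\int_F\int_F\int_{\R^d}f(z)m(x,y;\nu)k(x,y,z)\,dz\,\nu(dx)\,\nu(dy)-\int_F f(x)\big(D(x)+I(x)\int_F U(x,y)\,\nu(dy)\big)\,\nu(dx)$; that is, $\int_0^t G_f(\mu_s^N)\,ds$ is already the pair of integral terms on the right of \eqref{rownanieslabe} evaluated along $\mu^N$.

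Next I would show the martingale is negligible and the family is tight. A direct computation gives $\qv{M^{N,f}}_t=\tfrac1N\int_0^tH_f(\mu_s^N)\,ds$ with $0\le H_f(\nu)\le C\norm f_\infty^2\big(\ilsk{\nu}{1}+\ilsk{\nu}{1}^2\big)$, using \eqref{upperbound} and the elementary identity $\int_F\int_F m(x,y;\nu)\,\nu(dx)\,\nu(dy)\le C\ilsk{\nu}{1}$, which holds for both \eqref{matingop1} and \eqref{mating-as2}; since $q\ge2$, Doob's inequality yields $\E\big(\sup_{t\le T}|M_t^{N,f}|^2\big)\le C_T\norm f_\infty^2/N\to0$. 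For tightness of $(\mu^N)$ in $\D([0,T],M(F))$ I would use Jakubowski's criterion: it suffices to have compact containment in $M(F)$ together with tightness of $(\ilsk{\mu^N}{f})$ in $\D([0,T],\R)$ for $f$ in the point-separating family $C_b(F)$; the latter follows from the Aldous--Rebolledo criterion, since on $\{\sup_{s\le T}\ilsk{\mu_s^N}{1}\le R\}$ the increments of $t\mapsto\int_0^tG_f(\mu_s^N)\,ds$ are $O_R(\norm f_\infty|t-t'|)$ and the martingale part is controlled by the previous estimate, while the moment bound makes such events of probability close to $1$ uniformly in $N$; compact containment (needed only when $F$ is unbounded) comes from a Gronwall bound on $\sup_{t\le T}\E\,\ilsk{\mu_t^N}{|\cdot|}$ using \eqref{srednia1}.

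It then remains to identify the limit and argue uniqueness. Let $\mu$ be the weak limit along a convergent subsequence; since the jumps of $\mu^N$ have mass $\le1/N$, $t\mapsto\mu_t$ is continuous, and I would pass to the limit in $\ilsk{\mu_t^N}{f}-\ilsk{\mu_0^N}{f}-\int_0^tG_f(\mu_s^N)\,ds=M_t^{N,f}$. The only point requiring care is the weak continuity of $\nu\mapsto G_f(\nu)$: the death part is obviously continuous ($D,I,U$ bounded continuous), while for the birth part one writes, in the random-mating case, $\int_F\int_F\int_{\R^d}f(z)m(x,y;\nu)k\,dz\,\nu(dx)\,\nu(dy)=\ilsk{\nu}{p}^{-1}\int_F\int_F g_f(x,y)\,\nu(dx)\,\nu(dy)$ with $g_f(x,y)=p(x)p(y)\int_{\R^d}f(z)k(x,y,z)\,dz$ bounded and continuous (continuity of $(x,y)\mapsto\int f k\,dz$ follows from continuity of $k$, \eqref{zalK} and dominated convergence, for $f\in C_b(F)$), and analogously for \eqref{mating-asd2} --- so $\nu\mapsto G_f(\nu)$ is weakly continuous \emph{as long as the denominators are bounded away from $0$}. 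This is the main obstacle. I would resolve it by an a priori persistence estimate: from \eqref{upperbound} the total mass of the limit obeys $\tfrac{d}{dt}\ilsk{\mu_t}{1}\ge-\overline D\ilsk{\mu_t}{1}-\overline I\,\overline U\ilsk{\mu_t}{1}^2$, whence $\ilsk{\mu_t}{1}$ stays above the positive solution of the associated Bernoulli equation throughout $[0,T]$ whenever $\mu_0\ne0$; combined with $a\ge\underline a$ (assortative case), respectively with positivity of $p$ and the first-moment control (random case), this bounds $\int_F a(x,z)\,\mu_s(dz)$, resp. $\ilsk{\mu_s}{p}$, below uniformly on $[0,T]$. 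Hence every subsequential limit satisfies \eqref{rownanieslabe} for $f\in C_b(F)$; a monotone-class argument, all the integrals being continuous under bounded pointwise convergence of $f$, extends this to bounded measurable $f$. Finally, solutions of \eqref{rownanieslabe} are unique --- a Gronwall estimate in total-variation norm, the coefficients being Lipschitz in $\mu$ on measures with total mass in a fixed interval $[\underline c,\overline c]$, $\underline c>0$ --- so all subsequential limits coincide with the unique solution $\mu$, the full sequence $(\mu^N)$ converges to $\mu$ in distribution in $\D([0,T],M(F))$, and, $\mu$ being deterministic, also in probability. The one genuinely new difficulty relative to the asexual case is thus the denominator in $m(x,y;\nu)$: both the limit passage and the uniqueness estimate rest on the positive lower bound for $\ilsk{\mu_s}{p}$ (or $\int_F a(x,z)\,\mu_s(dz)$) on $[0,T]$, so the persistence estimate must be put in place first.
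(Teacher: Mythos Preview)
Your proposal is correct and follows the standard route the paper itself only sketches in one sentence (Ethier--Kurtz, Chapter~4, Corollary~8.16, plus Lipschitz continuity of the mating operator in total variation, adapted from \cite{rudnicki_wieczorek}); your identification of the denominator in $m(x,y;\nu)$ as the one genuinely new difficulty is precisely the point. One small clarification to avoid the apparent circularity between identification and the persistence estimate: pass to the limit first for the test function $f\equiv 1$, where the denominator cancels identically (indeed $\int_F\int_F m(x,y;\nu)\,\nu(dx)\,\nu(dy)$ equals $\ilsk{\nu}{p}$ in the random case and $\ilsk{\nu}{1}$ in the assortative case, so $G_1$ is weakly continuous with no lower mass bound needed), derive the mass equation and hence the positive lower bound on $\ilsk{\mu_s}{1}$ on $[0,T]$, and only then carry out the identification for general~$f$.
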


The standard proof of the above theorem is based on \cite{kurtz} Corollary 8.16, Chapter 4, and since the mating is described by Lipschitz continuous operator mapping the space of positive, finite Borel measures with total variation norm into itself, 
it can be directly adapted, for example, from \cite{rudnicki_wieczorek}.

\subsection{Strong solutions in the space of measures}
\label{ss:densities}
Some of interesting biological examples do not satisfy either condition (\ref{gestoscK}) or (\ref{zalK}). This is the reason why in this part of the paper we study the behavior of the function $\mu_t$ given by Theorem \ref{th-lp} in more general setting. To this end, we use the following formal notation
\begin{equation}
\label{rownaniemocne}
\begin{aligned}
\frac{d}{d t} \mu_t(dz)=\int_F\int_F 
&\,m(x,y;\mu_t)\,
K(x,y,dz)\mu_t(dx)\mu_t(dy)\\
&{}-\bigg(D(z)+I(z)\int_F U(z,y)\mu_t(dy)\bigg)\mu_t(dz),
\end{aligned}
\end{equation}
of equation (\ref{rownanieslabe}) in the space of positive, finite Borel measures $M(F)$ on the set $F$ with the total variation norm 
$\norm{\nu}_{TV}=\sup\{
|\ilsk{\nu}{f}|\colon \,\,f \textrm{ -- measurable, } \sup\limits_{x\in F}|f(x)|\le 1\}$.  

\begin{thm}\label{istjednozn}
Assume that the functions $a, p, D, I, U$ and $(\ref{Kmierzalne})$ are measurable, and condition $(\ref{upperbound})$ holds. Moreover, suppose that there exist positive constants $\underline p, \underline I, \underline U$ such that 
\begin{equation}
p(x)\geq \underline p, \quad I(x)\geq \underline I, \quad U(x,y)\geq \underline U,
\end{equation} for all $x,y\in F$. If $\mu_0\in M(F)$, then there exists a unique 
solution $\mu_t$, $t\geq 0$, of equation $(\ref{rownaniemocne})$ with the initial condition $\mu_0$, and the function $t\mapsto \mu_t$ is bounded and continuous 
in the norm $\norm{\cdot}_{TV}$.
\end{thm}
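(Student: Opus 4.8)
The plan is to recast equation~(\ref{rownaniemocne}) as an abstract semilinear evolution equation in the Banach space $M(F)$ endowed with $\norm{\cdot}_{TV}$, and then apply a fixed-point argument together with an a~priori bound that rules out finite-time blow-up. Write the right-hand side of~(\ref{rownaniemocne}) as $\mathcal{F}(\mu)=\mathcal{B}(\mu)-\mathcal{D}(\mu)$, where $\mathcal{B}(\mu)(dz)=\int_F\int_F m(x,y;\mu)K(x,y,dz)\,\mu(dx)\,\mu(dy)$ is the mating/birth term and $\mathcal{D}(\mu)(dz)=\big(D(z)+I(z)\int_F U(z,y)\,\mu(dy)\big)\mu(dz)$ is the death term. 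The bounds in~(\ref{upperbound}) together with $p(x)\ge\underline p$ (and $\underline a\le a(x,y)$) guarantee that the denominators in~(\ref{mating-rd}) and~(\ref{mating-asd2}) are bounded below by $\underline p\,\norm{\mu}_{TV}$ (respectively $\underline a\,\norm{\mu}_{TV}$) on the set $\{\norm{\mu}_{TV}\ge r\}$ for any $r>0$. This is the crucial point that makes $m(\cdot,\cdot;\mu)$ Lipschitz in $\mu$ away from the origin, and hence $\mathcal{F}$ locally Lipschitz on each set $\{\mu\in M(F):\ r\le\norm{\mu}_{TV}\le R\}$.

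First I would establish local existence and uniqueness. Fix $\mu_0\in M(F)$ with $\norm{\mu_0}_{TV}=c_0$. If $c_0=0$ the zero function is clearly the unique solution (both $\mathcal{B}$ and $\mathcal{D}$ vanish), so assume $c_0>0$. On the ball $\{\norm{\mu-\mu_0}_{TV}\le c_0/2\}$ one has $\norm{\mu}_{TV}\ge c_0/2>0$, so $\mathcal{F}$ is Lipschitz there; Picard iteration on the integral equation $\mu_t=\mu_0+\int_0^t\mathcal{F}(\mu_s)\,ds$ in $C([0,\tau],M(F))$ yields a unique local solution on some interval $[0,\tau]$ with $\tau=\tau(c_0)$. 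Positivity is preserved: the death term has the form $-V(z;\mu)\,\mu(dz)$ with $V$ bounded, so the solution can be written via a Duhamel/variation-of-constants representation $\mu_t(dz)=e^{-\int_0^t V(z;\mu_s)\,ds}\mu_0(dz)+\int_0^t e^{-\int_s^t V(z;\mu_r)\,dr}\mathcal{B}(\mu_s)(dz)\,ds$, which is manifestly a positive measure whenever $\mu_0$ is; this also confirms $\mu_t\in M(F)$ rather than merely a signed measure.

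Next I would derive the two-sided a~priori bound on $n(t):=\norm{\mu_t}_{TV}=\ilsk{\mu_t}{1}$ needed to globalize. Taking $f\equiv 1$ in~(\ref{rownanieslabe}) (equivalently integrating~(\ref{rownaniemocne})) and using that $m(x,y;\mu)$ integrated against $\mu(dx)\mu(dy)$ equals $\norm{\mu}_{TV}$ in the semi-random case~(\ref{mating-rd}) and equals $\norm{\mu}_{TV}$ in the symmetric assortative case~(\ref{mating-asd2}) as well (each of the two fractions integrates to $\tfrac12\norm{\mu}_{TV}$), and that $K(x,y,\cdot)$ is a probability measure, the birth term contributes exactly $n(t)$; the death term is bounded below by $\underline I\,\underline U\, n(t)^2$ (using the positive lower bounds) and above by $(\overline D+\overline I\,\overline U\,n(t))n(t)$. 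Hence
\begin{equation}\label{logistic-est}
n(t)-\big(\overline D\, n(t)+\overline I\,\overline U\, n(t)^2\big)\le n'(t)\le n(t)-\underline I\,\underline U\, n(t)^2 .
\end{equation}
A comparison argument with the two logistic ODEs bounding~(\ref{logistic-est}) shows that $n(t)$ stays in a fixed interval $[\,\underline n(c_0),\overline n(c_0)\,]$ for all $t\ge 0$, with $\overline n(c_0)=\max\{c_0,1/(\underline I\,\underline U)\}$ and a strictly positive lower bound $\underline n(c_0)>0$ for each $c_0>0$ (and $n(t)$ bounded away from $0$ on $[0,T]$ for every $T$, even if the lower steady state of the first ODE is $0$, because $n$ starts positive and the ODE is Lipschitz). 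The main obstacle is precisely this lower bound: the Lipschitz estimate for $m(\cdot,\cdot;\mu)$ blows up as $\norm{\mu}_{TV}\to 0$, so one must verify that the solution never approaches the origin in finite time — which is exactly what the lower half of~(\ref{logistic-est}) delivers. With $r:=\inf_{[0,T]}n(t)>0$ and $R:=\sup_{[0,T]}n(t)<\infty$ in hand, $\mathcal{F}$ is globally Lipschitz on the invariant region $\{r\le\norm{\mu}_{TV}\le R\}$, the local solution extends to all of $[0,T]$ (hence to $[0,\infty)$), uniqueness follows from Gronwall's inequality, and boundedness and norm-continuity of $t\mapsto\mu_t$ are immediate from the integral equation and the bound on $\norm{\mathcal{F}(\mu_t)}_{TV}$.
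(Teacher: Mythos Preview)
Your argument is essentially correct and follows the same contraction--plus--a~priori--bound template as the paper, but there is one computational slip and one unnecessary detour worth pointing out.

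The slip: in the semi-random case~(\ref{mating-rd}) the birth term does \emph{not} integrate to $\norm{\mu}_{TV}$. One has
\[
\int_F\int_F m(x,y;\mu)\,\mu(dx)\,\mu(dy)=\frac{\big(\int_F p\,d\mu\big)^2}{\int_F p\,d\mu}=\int_F p\,d\mu,
\]
which lies between $\underline p\,n(t)$ and $\overline p\,n(t)$ but is not equal to $n(t)$ unless $p\equiv 1$. (Your claim is correct for the assortative rate~(\ref{mating-asd2}).) The fix is trivial---replace the exact equality by these two-sided bounds---and your logistic comparison~(\ref{logistic-est}) survives with $\overline p\,n(t)$ in the upper estimate and $\underline p\,n(t)$ in the lower one.

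The detour: you treat the $1/\norm{\mu}_{TV}$ factor in $m(\cdot,\cdot;\mu)$ as a genuine obstacle and invest effort in a lower bound for $n(t)$ to keep the trajectory away from the origin. The paper avoids this entirely by observing that, although $m(x,y;\mu)$ is singular at $\mu=0$, the \emph{integrated} birth operator $\mathcal B(\mu)=\int\int m(x,y;\mu)K(x,y,\cdot)\,\mu(dx)\,\mu(dy)$ is globally Lipschitz: one factor of $\mu$ cancels the denominator, yielding the uniform bounds $\int_F m(x,y;\nu)\,\nu(dx)\le\overline m$ and $\int_F\int_F|m(x,y;\mu)-m(x,y;\nu)|\,\mu(dx)\,\nu(dy)\le\overline m\norm{\mu-\nu}_{TV}$ (equations~(\ref{mogranicz}) and~(\ref{mlipschitz}) in the paper). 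With this in hand the paper runs a Banach fixed point on a ball $B(0,2\norm{\mu_T}_{TV})$, needs only the \emph{upper} estimate $n'\le n(\overline m-\underline I\,\underline U\,n)$ to make the step size $\delta$ independent of $T$, and gets positivity at the end from the differential inequality $\tfrac{d}{dt}\mu_t(A)\ge -\phi(t)\mu_t(A)$. Your route is sound and perhaps more robust to other forms of $m$, but for the mating rates at hand the paper's cancellation trick is shorter.
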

\begin{proof}
Let us fix $T\geq 0$, $\delta>0$ and consider a space $C_T=C([T,T+\delta], M(F))$ 
with a norm $\norm{\mu_\cdot}_T=\sup_{t\in[T,T+\delta]} \norm{\mu_t}_{TV}$.
Define the operator $\Lambda\colon C_T\to C_T$ by the formula 
\begin{align*}
(\Lambda\mu_\cdot)(t)(dz)={}&\mu_T(dz)+\int_T^t \int_F\int_F m(x,y;\mu_s)K(x,y,dz)\,\mu_s(dx)\,\mu_s(dy)\,ds\\
&{}-\int_T^t\bigg(D(z)+I(z)\int_F U(z,y)\mu_s(dy)\bigg)\mu_s(dz)\,ds,
\end{align*}
where $\mu_T\in M(F)$ is some measure. Notice that from assumption (\ref{upperbound}) there are constants $\overline m, \hat m$ depending on $\overline p$ in the case of semi-random mating, and on $\underline a, \overline a$ in the case of assortative mating, such that for any $y\in F$ and measures $\mu, \nu\in M(F)$
\begin{equation}\label{mogranicz}
\int_F m(x,y;\nu)\,\nu(dx)\leq \overline m,
\end{equation}
and
\begin{equation}\label{mlipschitz}
\int_F\int_F|m(x,y;\mu)-m(x,y;\nu)|\,\mu(dx)\,\nu(dy)\leq \overline m \norm{\mu-\nu}_{TV}.
\end{equation}
Take functions $\mu_\cdot, \nu_\cdot\in C_T$ from the ball $B(0,2\norm{\mu_T}_{TV})$. Then
\begin{equation}\label{ist1}
\norm{\Lambda\mu_\cdot}_T \leq \norm{\mu_T}_{TV}+2\delta (\overline m+\overline D) \norm{\mu_T}_{TV}+4\delta \overline I\overline U \norm{\mu_T}_{TV}^2,
\end{equation}
and 
\begin{equation}
\label{ist2}
\norm{\Lambda\mu_\cdot-\Lambda\nu_\cdot}_T\leq \delta \big(\overline D+ 2\overline m + 4\hat m\norm{\mu_T}_{TV}^2\big)  \norm{\mu_\cdot-\nu_\cdot}_T 
+ 4\delta\overline I\overline U\norm{\mu_T}_{TV}\norm{\mu_\cdot-\nu_\cdot}_T. 
\end{equation}
Taking $\delta>0$ sufficiently small, from (\ref{ist1}) and (\ref{ist2}) it follows that  
$\Lambda$ transforms the ball $B(0,2\norm{\mu_T}_{TV})$ 
into itself,  and is Lipschitz continuous with some constant $L<1$. 
By the Banach fix point theorem there exists a unique solution in the interval $[T,T+\delta]$. 
Consequently, there exists a unique local solution of (\ref{rownaniemocne}). 

To extend a local solution to a global solution on a whole interval $[0,\infty)$
it is sufficient to show that $\delta$ can be chosen independently of $T$. 
It follows from the upper-bound of the solutions. 
To search for upper-bound of the solutions notice that
\begin{equation}
\label{ograniczenienormy}
\frac{d}{dt}\norm{\mu_t}_{TV}\leq \norm{\mu_t}_{TV} \big(\overline m -\underline I\,\underline U \norm{\mu_t}_{TV}\big).
\end{equation}
Consequently,
\[
\norm{\mu_t}_{TV}\leq \max\left\{\norm{\mu_0}_{TV}, \frac{\overline m}{\underline I\,\underline U}\right\},
\]
which completes the proof of the existence and uniqueness.

Eventually, we show that for fixed $t>0$ the measure $\mu_t$ is positive. Indeed, for any Borel set $A$ we can write 
$\frac{\partial}{\partial t} \mu_t(A)\geq -\phi(t)\mu_t(A)$, where 
$\phi(t)=\left(\overline D+\overline I\overline U\mu_t(F)\right).$
Finally, 
\[
\mu_t(A)\geq \mu_0(A) \exp\bigg\{-\int_0^t\phi(s)\,ds\bigg\} \ge 0.
\qedhere
\]
\end{proof}

The straight-forward conclusion is the following statement about the solutions in $L^1$ space.
\begin{col}
Suppose that condition $(\ref{gestoscK})$ holds. Under the assumptions of Theorem~$\ref{istjednozn}$, if $\mu_0$ has a density $u_0\in L^1$ with respect to the Lebesgue measure, then $\mu_t$ also has a density $u(t,\cdot)\in L^1$ with respect to the Lebesgue measure, which is the unique solution of the following equation
\begin{equation}
\begin{aligned}
\frac{\partial}{\partial t} u(t,z)=\int_F\int_F 
&\,m(x,y;u(t,\xi)d\xi)\,
k(x,y,z)u(t,x)u(t,y)dxdy\\
&{}-\bigg(D(z)+I(z)\int_F U(z,y)u(t,y)dy\bigg)u(t,z),
\end{aligned}
\end{equation}
with the initial condition $u(0,\cdot)=u_0(\cdot)$.
\end{col}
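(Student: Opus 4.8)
The plan is to deduce the corollary directly from Theorem~\ref{istjednozn} by showing that, under condition (\ref{gestoscK}), the unique measure solution $\mu_t$ stays absolutely continuous whenever $\mu_0$ does, and that its density solves the stated equation. First I would note that by Theorem~\ref{istjednozn} there is a unique solution $\mu_t\in M(F)$, bounded and continuous in $\norm{\cdot}_{TV}$, so the only thing to prove is the representation $\mu_t(dz)=u(t,z)\,dz$ with $u(t,\cdot)\in L^1$. Since $K(x,y,dz)=k(x,y,z)\,dz$, the first (birth) term in (\ref{rownaniemocne}) is automatically absolutely continuous in $z$ with density $\int_F\int_F m(x,y;\mu_s)k(x,y,z)\,\mu_s(dx)\,\mu_s(dy)$, while the death term is a pointwise multiple of $\mu_s$. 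Writing $\mu_t=\mu_t^{ac}+\mu_t^{s}$ in its Lebesgue decomposition, the singular part can only be created by the death term, so $\mu_t^s(dz)=\mu_0^s(dz)\exp\{-\int_0^t(D(z)+I(z)\int_F U(z,y)\mu_s(dy))\,ds\}$; if $\mu_0$ has a density then $\mu_0^s=0$ and hence $\mu_t^s=0$ for all $t$.

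More carefully, I would rerun the Banach fixed point argument from the proof of Theorem~\ref{istjednozn}, but in the closed subspace $C_T^{ac}\subset C_T$ of measure-valued paths that are absolutely continuous with respect to Lebesgue measure at every time, equivalently in $C([T,T+\delta],L^1(F))$ with the $L^1$-norm in place of $\norm{\cdot}_{TV}$ (these agree on nonnegative densities). The point is that the operator $\Lambda$ maps $C_T^{ac}$ into itself: the integrated birth term has an $L^1$ density by Fubini and the bound (\ref{mogranicz}), and the integrated death term preserves absolute continuity because it multiplies $\mu_s$ by a bounded measurable function of $z$. The contraction estimates (\ref{ist1})--(\ref{ist2}) carry over verbatim, with $\norm{\cdot}_{TV}$ replaced by $\norm{\cdot}_{L^1}$ and using (\ref{mlipschitz}); note the Lipschitz bound on $m$ requires the extra lower bounds $p\ge\underline p$, $a\ge\underline a$ already assumed in Theorem~\ref{istjednozn}. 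Hence for $\delta$ small (independent of $T$, by the a priori bound (\ref{ograniczenienormy}) which is unchanged) there is a unique fixed point $u(t,\cdot)\in L^1$, and by uniqueness in $M(F)$ it coincides with $\mu_t$.

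Finally I would check that this density satisfies the displayed PDE pointwise in $t$ (in the sense of the integrated/mild form) by simply substituting $\mu_s(dx)=u(s,x)\,dx$ into (\ref{rownaniemocne}): the mating rate becomes $m(x,y;u(t,\xi)\,d\xi)$, the birth term becomes $\int_F\int_F m(x,y;u)k(x,y,z)u(t,x)u(t,y)\,dx\,dy$, and the death term becomes $(D(z)+I(z)\int_F U(z,y)u(t,y)\,dy)u(t,z)$, which is exactly the right-hand side of the stated equation; differentiating the integrated form in $t$ (legitimate since the integrand is continuous in $s$ in $L^1$) gives $\partial_t u$. Uniqueness of the $L^1$ solution is inherited from uniqueness of $\mu_t$ in $M(F)$ together with the injection $L^1\hookrightarrow M(F)$.

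I expect the main (and essentially only) obstacle to be the bookkeeping needed to show $\Lambda$ stabilizes $C_T^{ac}$ and contracts in $L^1$ rather than $\norm{\cdot}_{TV}$ — in particular, verifying that the mating term $\int_F\int_F m(x,y;u)k(x,y,z)u(x)u(y)\,dx\,dy$ is genuinely in $L^1(F)$ with norm controlled by $\overline m\,\norm{u}_{L^1}$ (via Fubini and $\int k(x,y,z)\,dz=1$), and that the $L^1$-Lipschitz estimate for $m$ in $u$ follows from (\ref{mlipschitz}). Everything else is a transcription of the already-completed proof of Theorem~\ref{istjednozn}; the positivity of $u$ follows from the positivity of $\mu_t$ shown there.
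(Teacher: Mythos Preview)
Your proposal is correct, but it takes a longer route than the paper. In fact, your first-paragraph heuristic \emph{is} essentially the paper's proof: the paper simply fixes a Borel set $A$ of Lebesgue measure zero, observes that $K(x,y,A)=0$ for all $x,y$ by (\ref{gestoscK}), so the birth term vanishes on $A$ and equation (\ref{rownaniemocne}) yields $\frac{d}{dt}\mu_t(A)\le 0$; since $\mu_0(A)=0$ and $\mu_t(A)\ge 0$ (positivity was already shown in Theorem~\ref{istjednozn}), one gets $\mu_t(A)=0$ for all $t$, and the Radon--Nikodym theorem gives the density. That is the entire argument --- no decomposition $\mu_t=\mu_t^{ac}+\mu_t^s$ is written out, and no fixed-point machinery is repeated.

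Your ``more careful'' plan of rerunning the contraction argument in $C([T,T+\delta],L^1(F))$ is valid and the bookkeeping you anticipate (Fubini for the birth term, (\ref{mlipschitz}) for the Lipschitz bound) goes through, but it is unnecessary: once Theorem~\ref{istjednozn} supplies a unique measure-valued solution, absolute continuity is a property you can verify \emph{a posteriori} on that solution rather than rebuild into the construction. The paper's approach buys brevity and avoids duplicating estimates; your approach has the minor advantage of yielding existence in $L^1$ directly, without appealing to positivity of $\mu_t$, but since positivity is already established this gains nothing here.
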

\begin{proof}
Take a Borel set $A$ with zero Lebesgue measure. Since the measure $K(x,y,dz)$ is absolute continuous with respect to the Lebesgue measure, $K(x,y,A)=0$ for every $x,y\in F,$ and consequently
$\frac{d}{d t} \mu_t(A)\leq -\underline D \,\mu_t(A)$, and $\mu_0(A)=0$.
Therefore $\mu_t(A)=0$ for all $t>0$, and the statement comes from the Radon-Nikodym theorem.
\end{proof}

\subsection{Boundedness, extinction and  persistence}
\label{ss:extinction}
From Theorem~\ref{istjednozn} it follows that the function $M(t)=\mu_t(F)$ is upper-bounded. Now we analyze further properties of the function $M(t)$. Let us recall that the population \textit{becomes extinct} if $\lim_{t \to\infty}M(t)=0$, and \textit{is persistent} provided $\liminf_{t \to\infty}M(t)>0$. 
\begin{pr}
\label{prlost} 
If $\inf_z D(z)\ge \sup_z p(z) $ in the case of random mating
and 
$\inf_z D(z)\ge 1$ in the case of assortative mating, then
the population becomes extinct.
If $\sup_z D(z)< \inf_z p(z) $ in the case of random mating
and $\sup_z D(z)< 1$ in the case of assortative mating, then
the population is persistent.
 \end{pr}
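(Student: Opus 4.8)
The plan is to reduce everything to a scalar differential inequality for the total mass $M(t)=\mu_t(F)$. Integrating equation~(\ref{rownaniemocne}) over $F$ and using that $K(x,y,\cdot)$ is a probability measure on $F$ (so $K(x,y,F)=1$), one sees — just as in the derivation of~(\ref{ograniczenienormy}) — that $M$ is of class $C^1$ with
\begin{equation*}
M'(t)=\int_F\!\int_F m(x,y;\mu_t)\,\mu_t(dx)\,\mu_t(dy)-\int_F D(z)\,\mu_t(dz)-\int_F\!\int_F I(z)\,U(z,y)\,\mu_t(dy)\,\mu_t(dz).
\end{equation*}
I would assume $\mu_0\neq 0$ throughout, so that the positivity assertion at the end of the proof of Theorem~\ref{istjednozn} yields $M(t)>0$ for every $t\ge 0$; if $\mu_0=0$ both claims are trivially true.

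The essential point — and the origin of the two thresholds in the statement — is the explicit value of the nonlinear birth integral. For the random-mating rate~(\ref{mating-rd}) it factorizes, $\int_F\!\int_F m(x,y;\mu_t)\,\mu_t(dx)\,\mu_t(dy)=\big(\int_F p\,d\mu_t\big)^2\big/\int_F p\,d\mu_t=\int_F p\,d\mu_t$, so the birth rate is governed by $p$; for the symmetrized assortative rate~(\ref{mating-asd2}), integrating each of the two summands first in the variable that does not occur in its denominator and using $a(x,y)=a(y,x)$ reduces the double integral to $\tfrac12 M(t)+\tfrac12 M(t)=M(t)$, so the per-capita birth rate is exactly $1$. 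Combining this with the elementary bounds $(\inf_z D)\,M(t)\le\int_F D\,d\mu_t\le(\sup_z D)\,M(t)$ and $\underline I\,\underline U\,M(t)^2\le\int_F\!\int_F I(z)U(z,y)\,\mu_t(dy)\,\mu_t(dz)\le\overline I\,\overline U\,M(t)^2$ (the latter using~(\ref{upperbound}) and the lower bounds of Theorem~\ref{istjednozn}), the extinction hypothesis ($\inf_z D\ge\sup_z p$ for random mating, $\inf_z D\ge 1$ for assortative mating) forces the birth term to be $\le\int_F D\,d\mu_t$, hence
\begin{equation*}
M'(t)\le-\underline I\,\underline U\,M(t)^2 ;
\end{equation*}
while the persistence hypothesis, with $\varepsilon:=\inf_z p-\sup_z D>0$ (resp. $\varepsilon:=1-\sup_z D>0$), makes the birth term minus $\int_F D\,d\mu_t$ at least $\varepsilon M(t)$, hence
\begin{equation*}
M'(t)\ge\varepsilon\,M(t)-\overline I\,\overline U\,M(t)^2 .
\end{equation*}

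It then remains to integrate these two Riccati-type inequalities, which I would do via the substitution $V:=1/M$ (legitimate since $M>0$). In the extinction case $V'\ge\underline I\,\underline U$, so $V(t)\to\infty$ and therefore $M(t)\to 0$. In the persistence case $V'\le\overline I\,\overline U-\varepsilon V$, so $\limsup_{t\to\infty}V(t)\le\overline I\,\overline U/\varepsilon$ and therefore $\liminf_{t\to\infty}M(t)\ge\varepsilon/(\overline I\,\overline U)>0$. All of this is routine except the birth-term identity of the middle paragraph — in particular the assortative computation — which I expect to be the only delicate step and which is exactly what pins the critical death rates at $\sup_z p(z)$ and $1$; the bound~(\ref{ograniczenienormy}) already available is not sharp enough at these thresholds, so the explicit evaluation cannot be dispensed with.
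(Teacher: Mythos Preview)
Your argument is correct and is exactly the paper's approach: bound $M'(t)$ above and below by logistic-type expressions $M(t)\big(\textrm{const}-\textrm{const}\cdot M(t)\big)$ and read off extinction or persistence. You spell out the birth-integral identities (which pin the thresholds at $\sup_z p$ and $1$) and the integration via $V=1/M$, both of which the paper leaves to the reader; one tiny slip in your aside is that if $\mu_0=0$ persistence \emph{fails} rather than holds trivially, though this degenerate case is tacitly excluded throughout.
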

\begin{proof}
In the case of random mating these  properties are simple consequences of  the following  inequalities
\[
M'(t)\leq M(t)\left(\bar p - \underline D - \underline I\,\underline U M(t)\right)
\]
and
\[
M'(t)\ge M(t)\left(\underline  p - \overline D - \overline I\,\overline U M(t)\right).
\]
In the case of assortative mating we use similar inequalities replacing    $\bar p$ and $\underline  p$ by 1.
\end{proof}

\subsection{Equation on a global attractor}
\label{ss:absorbing set}
In order to describe  more precisely the  asymptotic behavior of $M(t)$,  we  need to assume that
the functions $p, D, I, U$ do not depend on $x$, and are positive.  
To avoid the extinction of the population, we additionally assume that $D<p$ in the case of random mating  and 
$D<1=:p$ in the case of assortative mating  (see Proposition \ref{prlost}).
Then,  using basic facts from the theory of differential equations,
 it is easy to see that
\begin{equation}
\label{ab:limit}
\lim_{t\to\infty} M(t)=\frac{p-D}{IU}.
\end{equation}   
The number $\frac{p-D}{IU}$ is an analogue of \textit{carrying capacity}
studied in \cite{bolker_pacala} and \cite{fournier_meleard}. 
In our case it can be viewed as a number of individuals per unit of volume 
after a long time. 

From (\ref{ab:limit}) it follows that all positive solutions converge to  the set 
\[
\mathcal A=\bigg\{\mu\in M(F)\colon \,\,\mu(F)=\frac{p-D}{IU}\bigg\}
\]
and the set $\mathcal A$ is invariant with respect to  equation $(\ref{rownaniemocne})$, i.e., 
if the initial condition $\mu_0$ belongs to  $\mathcal A$,  then 
$\mu_t \in \mathcal A$ for $t>0$. It means that  $\mathcal A$ is a \textit{global attractor} for  equation $(\ref{rownaniemocne})$ 
and it is interesting  to study  the behavior of solutions on the set  $\mathcal A$.

First, we consider  the case of random mating. Let $\mu\in \mathcal A$.
We replace $\mu_t(dx)$ by $\frac{p-D}{IU}\mu_t(dx)$ and
and $t$ by $p t$ in (\ref{rownaniemocne}), to obtain the equation
\begin{equation}
\label{rownstab}
\frac{\partial}{\partial t} \mu_t(dz)+ \mu_t(dz)=\int_F\int_F K(x,y,dz)\mu_t(dx)\mu_t(dy).
\end{equation}
The measure $\mu_t(dz)$ is  a \textit{probability measure} for all $t\ge 0$,
 i.e., $\mu_t\ge 0$ and $\mu_t(F) =1$.
In the case of assortative mating we replace $\mu_t(dz)$ by $\frac{1-D}{IU}\mu_t(dz)$ and receive
\begin{equation}\label{rownstab-am}
\frac{\partial}{\partial t} \mu_t(dz)+ \mu_t(dz)=\int_F\int_F \psi(x,y;\mu_t)K(x,y,dz)\mu_t(dx)\mu_t(dy),
\end{equation}
where
\[
\psi(x,y;\mu)=\frac{a(x,y)}{2\int_F a(x,r)\mu(dr)}+\frac{a(x,y)}{2\int_F a(y,r)\mu(dr)}.
\]

\section{Asymptotic stability in the case of random mating}
\label{s:as}
\subsection{General remarks}
\label{ss: gr}
In this section we study the convergence of solutions of equation (\ref{rownstab}) to some stationary solutions.
Equation (\ref{rownstab})  can be treated
as an evolution equation
 \begin{equation}
\label{ewol}
\mu_t'=\mathcal P\mu_t-\mu_t ,
 \end{equation}
 where the operator  $\mathcal P$ acting on the space of all probability Borel measures
on $F$ is given by the formula 
\begin{equation}
(\mathcal P\mu)(A)=\int_F\int_F K(x,y,A)\,\mu(dx)\,\mu(dy),
\end{equation}
The solution of 
(\ref{ewol}) with the initial measure $\mu_0$ 
is the deterministic process $\mu_t$ given by 
Theorem~\ref{istjednozn}. The set $\mathcal O(\mu_0):=\{\mu_t\colon t\ge 0\}$ is called the \textit{orbit} of $\mu_0$.

Since the problem of the asymptotic stability of the solutions 
of equation (\ref{ewol})  in an arbitrary $d$-dimensional space seems to be quite difficult, 
we consider only the case when $d=1$ and $F$ is a closed interval with nonempty interior. 
Generally, equation  (\ref{ewol})  has a lot of different  stationary measures  and it is rather difficult to predict the limit of a given solution. 
Assumption (\ref{srednia2}) allows us to omit this difficulty. 
Indeed, if a measure $\mu$ has a finite first moment $q$,  
then according to (\ref{srednia1}) and (\ref{srednia2}) we have
\begin{multline*}
\int_F |z|\,(\mathcal{P}\mu)(dz)=\int_F\int_F\int_F |z|K(x,y,dz)\,\mu(dx)\,\mu(dy)\\
\le \int_F\int_F (c_1+c_2|x|+c_3|y|)\, \mu(dx)\,\mu(dy)\le c_1+(c_2+c_3)\int_F|x|\,\mu(dx)<\infty
\end{multline*}
and
\[
\int_F z\,(\mathcal{P}\mu)(dz)=\int_F\int_F\int_F zK(x,y,dz)\,\mu(dx)\,\mu(dy)=\int_F\int_F \frac{x+y}{2}\, \mu(dx)\,\mu(dy)=q.
\]
Therefore,  any solution $\mu_t$ of  equation (\ref{ewol}) have the same first  moment for all $t\ge 0$. 
It means that we can restrict our  consideration only to probability Borel measures 
with the same first moment.

The following example shows why we consider solutions of  equation (\ref{ewol})
 with values in the space of probability Borel measures instead of the space of probability densities.
In this example all the stationary solutions are Dirac measures, and
any solution converges in the weak sense to some stationary measure.
\begin{example}
\label{e:2}
Let $Z$ be a random variable with values in the interval $[-1,1]$ such that $\E Z=0$ and $|Z|\not\equiv 1.$
Assume that if $x$ and $y$  
are parental  traits, then 
the phenotypic trait of an offspring
is given by 
\[
\frac{x+y}2+ Z\frac{|x-y|}{2},
\]
i.e., the trait of an offspring is distributed between the traits of parents according to the law of $Z$. 
For random variable $X$ we denote by $m_1(X)$ and $m_2(X)$ its first and second moments and by $D(X)$ its variance, i.e., $D(X)=m_2(X)-(m_1(X))^2$.
If $M_t$ is a random variable distributed by the solution $\mu_t$ of equation (\ref{ewol}) with the finite second moment, then
$\bar x:=m_1(M_t)$ is a constant, and 
\begin{equation}
\frac {d}{dt} D(M_t)= -\frac12 (1-D(Z))D(M_t).
\end{equation}
Since $D(Z)<1$, we have $\lim_{t\to\infty} D(M_t)=0$. Consequently,
$\mu_t$ convergence in the weak sense to $\delta_{\bar x}$.
\end{example}

\subsection{Wasserstein distance} 
\label{W-d}
In order to investigate asymptotic properties of the solutions, we recall some basic theory concerning Wasserstein distance between measures.
For $\alpha\ge 1$ we denote by $\mathcal{M}_{\alpha}$  the set of all probability Borel measures $\mu$ on $F$ such that 
$\int_F |z|^{\alpha}\,\mu(dz)<\infty$
 and  by $\mathcal{M}_{\alpha,q}$ the subset of $\mathcal{M}_{\alpha}$  which contains all the  measures such that
 $\int_F z\,\mu(dz)=q$.
For any two measures $\mu,\nu\in \mathcal{M}_1$, we define the \textit{Wasserstein distance} by the formula
\begin{equation}
d(\mu,\nu)=\sup_{f\in\textup{Lip}_1} \int_F f(z)(\mu-\nu)(dz),
\end{equation}
where $\textup{Lip}_1$ is a set of all continuous functions $f:F\to \R$ such that for any $x,y\in F$ 
\[
|f(x)-f(y)|\leq |x-y|.
\]

The following lemma is of a great importance in the subsequent part of the paper.

\begin{lem}
\label{waslem} 
The  Wasserstein distance between measures
 $\mu,\nu\in \mathcal{M}_{1}$ can be computed by the formula
\begin{equation}
d(\mu,\nu)=\int_F |\Phi(x)|\,dx,
\end{equation}
where $\Phi(z)=(\mu-\nu)\left(F\cap(-\infty,z]\right)$ is a cumulative distribution function of the signed measure $\mu-\nu$.
\end{lem}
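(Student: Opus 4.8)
The plan is to derive the single integral identity
\[
\int_F f(z)\,(\mu-\nu)(dz)=-\int_F f'(z)\,\Phi(z)\,dz\qquad\text{for all }f\in\textup{Lip}_1,
\]
and then obtain the two inequalities of the lemma as immediate corollaries. I would first record two facts about $\Phi$ that follow from $\mu,\nu\in\mathcal M_1$. On the one hand $\Phi\in L^1(F)$: for $z$ large $|\Phi(z)|\le\mu((z,\infty))+\nu((z,\infty))$ and for $z$ very negative $|\Phi(z)|\le\mu((-\infty,z])+\nu((-\infty,z])$, and by Tonelli $\int_0^\infty\mu((z,\infty))\,dz=\int_F z\,\1_{\{z>0\}}\,\mu(dz)<\infty$, with the three remaining tail integrals treated the same way. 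On the other hand $|z|\,|\Phi(z)|\to0$ as $|z|\to\infty$, since $|z|\,\mu((z,\infty))\le\int_{(z,\infty)}|s|\,\mu(ds)\to0$ and analogously for the other tails; this is what will force the boundary terms below to vanish. (If $F$ is a bounded interval both facts are obvious, $\Phi$ being bounded and compactly supported.)

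For the identity, fix $f\in\textup{Lip}_1$. As a Lipschitz function $f$ is absolutely continuous on compact intervals, hence a.e.\ differentiable with $|f'|\le1$ and $f(z)=f(z_0)+\int_{z_0}^z f'(s)\,ds$ for a fixed $z_0\in F$; in particular $|f(z)|\le|f(z_0)|+|z-z_0|$, so $\int_F|f|\,d\mu,\int_F|f|\,d\nu<\infty$ and $\ilsk{\mu-\nu}{f}$ is well defined. On each bounded window $(c,b]\subset F$, Fubini's theorem applied to $\{(s,z)\colon c<s\le z\le b\}$ together with $(\mu-\nu)(F)=0$ yields the Stieltjes integration-by-parts formula
\[
\int_{(c,b]}f\,d(\mu-\nu)=f(b)\Phi(b)-f(c)\Phi(c)-\int_c^b f'(z)\,\Phi(z)\,dz.
\]
Letting $c\to-\infty$ and $b\to+\infty$, the left side converges to $\ilsk{\mu-\nu}{f}$ by dominated convergence, $f(b)\Phi(b)$ and $f(c)\Phi(c)$ tend to $0$ by the second preliminary fact and $\Phi(\pm\infty)=0$, and $\int_c^b f'\Phi\to\int_F f'\Phi$ by the first fact. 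This establishes the identity.

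The two inequalities then follow instantly. Since $|f'|\le1$ everywhere, $\big|\ilsk{\mu-\nu}{f}\big|\le\int_F|\Phi(z)|\,dz$ for every $f\in\textup{Lip}_1$, hence $d(\mu,\nu)\le\int_F|\Phi|\,dx$. Conversely, put $f_0(z)=-\int_{z_0}^z\sgn(\Phi(s))\,ds$; then $f_0\in\textup{Lip}_1$ (the integrand is bounded by $1$) and $f_0'=-\sgn\Phi$ a.e., so by the identity $\ilsk{\mu-\nu}{f_0}=\int_F\sgn(\Phi(z))\,\Phi(z)\,dz=\int_F|\Phi|\,dx$, giving $d(\mu,\nu)\ge\int_F|\Phi|\,dx$. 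Combining the two bounds proves the lemma. The only step that needs genuine care is the passage to the limit in the integration-by-parts identity when $F$ is unbounded, and that is precisely where the assumption $\mu,\nu\in\mathcal M_1$ enters.
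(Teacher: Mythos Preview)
Your proof is correct and follows essentially the same route as the paper: integrate by parts to convert $\int_F f\,d(\mu-\nu)$ into $-\int_F f'\Phi$, use the first-moment hypothesis to kill the boundary terms, and then optimize by choosing $f'=-\sgn\Phi$. You are simply more explicit than the paper about why $\Phi\in L^1(F)$, why the boundary terms vanish (the paper's bound $|f(x)\Phi(x)|\le|x\Phi(x)|$ tacitly normalizes $f$ by a constant), and about the construction of the optimal $f_0$, but these are refinements of the same argument rather than a different approach.
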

\begin{proof}
Let $\Phi_{\mu}$ and 
$\Phi_{\nu}$ be the cumulative distribution functions 
of  the measures $\mu$ and $\nu$. Since these measures  have  finite first absolute moments  
we have
\[
\lim_{x\to-\infty}|x|\Phi_{\mu}(x) =\lim_{x\to -\infty}|x|\Phi_{\nu}(x)=0  
\] 
and 
\[
\lim_{x\to \infty}x(1-\Phi_{\mu}(x)) =\lim_{x\to\-\infty}x(1 -\Phi_{\nu}(x))=0.  
\] 
This gives 
\[
\limsup_{x\to \pm \infty}|f(x)|\Phi (x) \le \lim_{x\to \pm \infty}|x|\Phi (x) =0\quad\textrm{for $f\in\textup{Lip}_1$}
\] 
and if $F$ is bounded  from below or from above, then we have $\Phi(x)=0$ for  $x\notin F$. 
Since  $f$ is a locally absolutely continuous function,  integrating by parts leads to the formula
\[
d(\mu,\nu)=\sup_{f\in\textup{Lip}_1} \int_F f(z)\,d\Phi(z)=\sup_{f\in\textup{Lip}_1} -\int_F f'(z)\Phi(z)\,dz.
\]
Clearly the supremum  is taken when $f'(z)=-\sgn \Phi(z)$.
\end{proof}
Consider probability measures $\mu$ and $\mu_n$, $n\in\N$, on the set $F$. We recall that the sequence $\mu_n$ converges \textit{weakly} (or \textit{in a weak sens}) to $\mu$, if for any continuous and bounded function $f\colon F\to\R$
$$\int_F f(x)\,\mu_n(dx)\to\int_F f(x)\,\mu(dx),$$
as $n\to\infty$.
It is well-known that the convergence in Wasserstein distance implies weak convergence of measures. Moreover, the space of probability Borel measures on any complete metric space is also a complete metric space with the Wasserstein distance (see e.g. \cite{bolley, rach}). The convergence of a sequence $\mu_n$ to $\mu$ in the space $\mathcal{M}_{1,q}$ is equivalent to the following condition (see \cite{villani}, Definition 6.7 and Theorem 6.8)
\begin{equation}\tag{C}\label{warrown}
\mu_n\to\mu \mbox{ weakly, as } n\to\infty \quad \mbox{ and } \quad\lim_{R\to\infty} \limsup_{n\to\infty} \int_{F_R} |x|\mu_n(dx)=0,
\end{equation}
where $F_R:=\{x\in F\colon |x|\geq R \}.$ Fix $q\in F$, $\alpha>1$, and $m>0$.  Consider a set $\widetilde{\mathcal{M}} \subset\mathcal{M} _{1,q}$ such that
\begin{equation}
\int_F |x|^\alpha \mu(dx)\leq m
\end{equation}
for all $\mu\in\widetilde{\mathcal{M}}.$ Then the set $\widetilde{\mathcal{M}}$ is relatively compact in $\mathcal{M}_{1,q}$.
Indeed, by Markov inequality we obtain that $\mu(\{x\colon |x|\leq R\})\geq 1-m/R^\alpha$ for all $\mu\in\widetilde{\mathcal{M}}$, 
what means that the set $\widetilde{\mathcal{M}}$ is tight, and thus  $\widetilde{\mathcal{M}}$ is relatively compact in the topology of weak convergence (see e.g. \cite{billingsley}). Moreover, for $\mu\in\widetilde{\mathcal{M}}$ we have
\[
\int_{F_R} |x|\mu_k(dx)\leq \frac{1}{R^{\alpha-1}} \int_{F_R} |x|^\alpha \mu_k(dx)\leq \frac{m}{R^{\alpha-1}},
\] 
which implies the second condition in (\ref{warrown}). Consequently, the set $\widetilde{\mathcal{M}}$ is relatively compact in $\mathcal{M}_{1,q}$.

\subsection{Theorems on asymptotic stability}
\label{ss:mr}
We use the script letter $\mathcal K$ for the cumulative distribution function of the measure $K$, i.e.,
\[
\mathcal K(x,y,z)= K(x,y,F\cap (-\infty,z]).
\]
The main result of this section is the following.
\begin{thm}
\label{limitthm} 
Fix $q\in F$.
Suppose that\\ 
\noindent (i) for all $y,z\in F$ the function $\mathcal K(x,y,z)$ is absolutely continuous with respect to $x$
and for each $a,b,y\in F$ we have
\begin{equation}\label{warstab2}
\int_F \Big|\frac{\partial}{\partial x}\mathcal K(a,y,z)-\frac{\partial}{\partial x}\mathcal K(b,y,z)\Big|\,dz< 1,
\end{equation} 
\noindent (ii)  there are constants $\alpha>1$, $L<1$, and $C\ge 0$  such that for 
every $\mu\in\mathcal M_{\alpha,q}$ we have 
\begin{equation}
\label{zwartosc}
\int_F |x|^{\alpha}\mathcal{P}\mu(dx)\le C+L\int_F |x|^{\alpha} \mu(dx).
\end{equation}     
Then for every initial measure $\mu_0\in\mathcal{M}_{1,q}$ 
there exists a unique solution $\mu_t$, $t\ge 0$,  of equation $(\ref{rownstab})$ 
with values in $\mathcal{M}_{1,q}$. 
Moreover,  there exists a unique measure  
$\mu^*\in\mathcal{M}_{1,q}$
such that $\mathcal P\mu^*=\mu^*$
and for every initial measure $\mu_0\in\mathcal{M}_{1,q}$ 
the solution $\mu_t$, $t\ge 0$,  of equation $(\ref{rownstab})$ 
converges to $\mu^*$ 
in the space $\mathcal{M}_{1,q}$.
\end{thm}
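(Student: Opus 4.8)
Equation \eqref{rownstab} is the evolution equation \eqref{ewol}, and the plan is to combine a Wasserstein contraction estimate for the operator $\P$ with the compactness furnished by hypothesis~\eqref{zwartosc}. I work throughout with the Duhamel form
\[
\mu_t=e^{-t}\mu_0+\int_0^t e^{-(t-s)}\,\P\mu_s\,ds
\]
of \eqref{rownstab}. First I would settle well-posedness in $\mathcal M_{1,q}$. A routine fixed-point argument, as in the proof of Theorem~\ref{istjednozn}, produces a unique global solution $t\mapsto\mu_t$ in $M(F)$ (the total mass $M(t)=\mu_t(F)$ solves $M'=M^2-M$, hence $M\equiv1$, which gives the a priori bound needed for globality). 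One then checks it stays in $\mathcal M_{1,q}$: positivity is inherited exactly as in Theorem~\ref{istjednozn}; the first absolute moment stays finite by \eqref{srednia1} and a Gronwall argument applied to $t\mapsto\int_F|z|\,\mu_t(dz)$; and the first moment is conserved and equal to $q$ by the computation carried out with \eqref{srednia2} in Section~\ref{ss: gr}. This establishes existence and uniqueness of the solution in $\mathcal M_{1,q}$.

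The heart of the proof is the estimate $d(\P\mu,\P\nu)\le d(\mu,\nu)$ for $\mu,\nu\in\mathcal M_{1,q}$, strict when $\mu\ne\nu$. By Lemma~\ref{waslem}, $d(\P\mu,\P\nu)=\int_F|\Phi(z)|\,dz$ with $\Phi(z)=\int_F\int_F\mathcal K(x,y,z)\,(\mu\otimes\mu-\nu\otimes\nu)(dx,dy)$. Writing $\mu\otimes\mu-\nu\otimes\nu=(\mu-\nu)\otimes\mu+\nu\otimes(\mu-\nu)$ and using the symmetry $\mathcal K(x,y,z)=\mathcal K(y,x,z)$ reduces $\Phi(z)$ to $\int_F\big(\int_F\mathcal K(w,\xi,z)(\mu+\nu)(d\xi)\big)(\mu-\nu)(dw)$. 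Let $\Psi(w)=(\mu-\nu)\big(F\cap(-\infty,w]\big)$; it vanishes at the ends of $F$ (finite first moments), and --- this is exactly where the common value $q$ of the first moments is used --- $\int_F\Psi(w)\,dw=0$. Integrating by parts in $w$ (the boundary terms vanish because $\mathcal K$ is bounded and $\Psi$ vanishes at the ends), $\Phi(z)$ becomes a linear integral functional of $\Psi$ with kernel $\int_F\partial_x\mathcal K(\cdot,\xi,z)(\mu+\nu)(d\xi)$, the differentiability in the first argument coming from hypothesis~(i). Since $\Psi$ has integral zero, the $L^1_z$-norm of this functional is controlled not by the $L^1$-norm of the kernel but, via the Jordan decomposition of $\Psi$ together with an elementary convexity estimate, by $\tfrac12\|\Psi\|_{L^1}$ times $\sup_{w,w'}\int_F\big|\int_F[\partial_x\mathcal K(w,\xi,z)-\partial_x\mathcal K(w',\xi,z)](\mu+\nu)(d\xi)\big|\,dz$; the factor $\tfrac12$ is what absorbs the total mass $(\mu+\nu)(F)=2$. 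Hence, by Fubini and hypothesis~\eqref{warstab2},
\[
d(\P\mu,\P\nu)\le\Big(\sup_{a,b,y\in F}\int_F\big|\partial_x\mathcal K(a,y,z)-\partial_x\mathcal K(b,y,z)\big|\,dz\Big)\,d(\mu,\nu)\le d(\mu,\nu).
\]
For $\mu\ne\nu$ the inequality is strict, since two distinct elements of $\mathcal M_{1,q}$ are never stochastically ordered (equal means force equal distribution functions) and this prevents equality in the triangle inequalities above; when $F$ is bounded one in fact gets a genuine contraction constant $<1$. Inserting $d(\P\mu_s,\P\nu_s)\le d(\mu_s,\nu_s)$ into the Duhamel formula and applying Gronwall to $h(t)=e^t d(\mu_t,\nu_t)$ shows that $t\mapsto d(\mu_t,\nu_t)$ is nonincreasing, i.e.\ the semiflow $S_t\colon\mu_0\mapsto\mu_t$ is nonexpansive on $\mathcal M_{1,q}$. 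I expect this estimate --- in particular the clean extraction of the constant $1$ rather than $2$, and the rigorous handling of the measure-valued integration by parts --- to be the main difficulty.

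It remains to convert nonexpansiveness plus strict contraction off the diagonal into convergence; here hypothesis~\eqref{zwartosc} enters. From \eqref{zwartosc}, $r(t)=\int_F|x|^\alpha\mu_t(dx)$ satisfies $r'\le C+(L-1)r$, so $r(t)\le\max\{r(0),C/(1-L)\}$, and the convex set $\mathcal C=\{\mu\in\mathcal M_{\alpha,q}:\int_F|x|^\alpha\mu(dx)\le C/(1-L)\}$ is positively invariant, absorbs every orbit issued from $\mathcal M_{\alpha,q}$, and is relatively compact in $\mathcal M_{1,q}$ by the criterion established at the end of Section~\ref{W-d} (tightness via Markov's inequality and the tail condition \eqref{warrown}). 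A stationary measure $\mu^*\in\mathcal C$ is obtained by applying the Banach fixed point theorem to the strict contractions $(1-\varepsilon)\P+\varepsilon\rho$ (with $\rho\in\mathcal C$ fixed; these map $\mathcal C$ into itself) and letting $\varepsilon\to0$ along a $d$-convergent subsequence, whence $\P\mu^*=\mu^*$; uniqueness of $\mu^*$ in $\mathcal M_{1,q}$ follows at once from $d(\P\mu^*,\P\nu^*)<d(\mu^*,\nu^*)$. For convergence, fix $\mu_0\in\mathcal M_{\alpha,q}$: the orbit lies in the compact set $\mathcal C$, $t\mapsto d(\mu_t,\mu^*)$ is nonincreasing and tends to some $\ell\ge0$, and every point $\omega$ of the nonempty $\omega$-limit set satisfies $d(\omega,\mu^*)=\ell$; since $S_s$ maps the $\omega$-limit set into itself and, by the strict contraction of $\P$ together with the Duhamel formula, $d(S_s\omega,\mu^*)<d(\omega,\mu^*)$ for some $s>0$ whenever $\omega\ne\mu^*$, one concludes $\ell=0$, i.e.\ $\mu_t\to\mu^*$ in $\mathcal M_{1,q}$. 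Finally a general $\mu_0\in\mathcal M_{1,q}$ is approximated in $\mathcal M_{1,q}$ by $\mu_0^{(n)}\in\mathcal M_{\alpha,q}$, and nonexpansiveness transfers the conclusion: $d(\mu_t,\mu^*)\le d(\mu_0,\mu_0^{(n)})+d(\mu_t^{(n)},\mu^*)$, where the first term is small uniformly in $t$ and the second tends to $0$ as $t\to\infty$.
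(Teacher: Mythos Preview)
Your approach is essentially the same as the paper's: you establish that $\mathcal{P}$ is a strict contraction on $\mathcal{M}_{1,q}$ via integration by parts and the zero-integral property $\int_F\Psi=0$ (the paper packages this as Lemmas~\ref{pr:lc} and~\ref{pr:lc2}), deduce that the semiflow $S_t$ strictly decreases Wasserstein distances (the paper's Lemma~\ref{pr:lc3}), use hypothesis~(ii) to get relative compactness of orbits (the paper's Lemma~\ref{l:rc}), and then run an $\omega$-limit argument followed by density in $\mathcal{M}_{1,q}$. The only substantive differences are cosmetic: you construct the fixed point $\mu^*$ separately via the perturbed maps $(1-\varepsilon)\mathcal{P}+\varepsilon\rho$ and Banach's theorem, whereas the paper obtains it directly as the (singleton) $\omega$-limit of any orbit issued from $\mathcal{M}_{\alpha,q}$; and your $\omega$-limit argument tracks the monotone quantity $d(\mu_t,\mu^*)$, while the paper argues with the maximal distance between two points of $\omega(\mu)$. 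Both variants are standard and equally valid.

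One remark on the strict inequality $d(\mathcal{P}\mu,\mathcal{P}\nu)<d(\mu,\nu)$: your displayed chain passing through the supremum only yields $\le$, and your justification for strictness (``not stochastically ordered prevents equality in the triangle inequalities'') is not quite the right mechanism. The strictness comes directly from hypothesis~(i): after writing $\Psi=\Psi^+-\Psi^-$ with $\int\Psi^+=\int\Psi^-=\tfrac12\int|\Psi|$ and normalizing to probability densities $p^\pm$, one bounds $\int_F|\Phi(z)|\,dz$ by the \emph{average}
\[
\tfrac12\|\Psi\|_{L^1}\iint_{F\times F}\Big(\int_F\big|g(a,z)-g(b,z)\big|\,dz\Big)\,p^+(a)\,p^-(b)\,da\,db,
\]
and since by~(i) the inner $z$-integral is strictly less than $2$ for every fixed $(a,b)$, averaging over the probability measure $p^+\otimes p^-$ preserves the strict inequality. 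Your observation that $\Psi$ must change sign is exactly what ensures $p^\pm$ are genuine probability densities, so that this averaging is over a nontrivial measure; but the strictness itself is inherited from~\eqref{warstab2}, not from the triangle inequality. Relatedly, your aside that bounded $F$ yields a uniform contraction constant $<1$ does not follow from~(i) as stated, since the supremum over $a,b,y$ of quantities each strictly below $1$ may well equal~$1$.
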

The straightforward conclusion is following: 
the mean of the stable population 
$\mu^*$  is equal to the mean $q$ of the initial population.
We split the proof of Theorem~\ref{limitthm} into a sequence of lemmas. 
Denote  by $\mathcal F_q$ the set of all  cumulative distribution functions of
the signed measures of the form $\mu-\nu$, where 
$\mu,\nu\in \mathcal{M}_{1,q}$.
\begin{lem}
\label{pr:lc}
Suppose that 
for  all $y\in F$ and   $\Phi \in\mathcal F_q$, $\Phi\not\equiv  0$, 
we have 
\begin{equation}
\label{w:stab1}
2\int_F\left|\int_F \mathcal K(x,y,z)\Phi(dx)\right| \,dz <\int_F |\Phi(x)|\,dx.
\end{equation}
Then 
\begin{equation}
\label{w:stab2}
d(\mathcal{P}\mu,\mathcal{P}\nu)<d(\mu,\nu)
\end{equation}
for $\mu,\nu\in \mathcal{M}_{1,q}$, $\mu\ne \nu$.
In particular, for every initial measure $\mu_0\in\mathcal{M}_{1,q}$ 
there exists a unique solution $\mu_t$, $t\ge 0$,  of equation $(\ref{rownstab})$ 
with values in $\mathcal{M}_{1,q}$. 
\end{lem}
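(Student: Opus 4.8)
The plan is to show that the contraction hypothesis \eqref{w:stab1} translates, via Lemma \ref{waslem}, directly into the strict contraction \eqref{w:stab2}, and then to deduce existence and uniqueness of the solution of \eqref{rownstab} in $\mathcal{M}_{1,q}$ from an iteration/fixed-point argument. First I would fix $\mu,\nu\in\mathcal{M}_{1,q}$ with $\mu\ne\nu$ and let $\Phi$ be the cumulative distribution function of the signed measure $\mu-\nu$, so that $\Phi\in\mathcal F_q$ and $\Phi\not\equiv 0$. By definition of $\mathcal P$, the signed measure $\mathcal P\mu-\mathcal P\nu$ has cumulative distribution function
\[
\Psi(z)=\int_F\int_F \mathcal K(x,y,z)\,\mu(dx)\,\mu(dy)-\int_F\int_F \mathcal K(x,y,z)\,\nu(dx)\,\nu(dy).
\]
The key algebraic step is to rewrite the difference of the two double integrals by the standard telescoping identity $\mu\otimes\mu-\nu\otimes\nu=(\mu-\nu)\otimes\mu+\nu\otimes(\mu-\nu)$, and then to use the symmetry $K(x,y,\cdot)=K(y,x,\cdot)$, i.e. $\mathcal K(x,y,z)=\mathcal K(y,x,z)$, to combine the two resulting terms. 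This yields
\[
\Psi(z)=\int_F\left(\int_F \mathcal K(x,y,z)\,(\mu-\nu)(dx)\right)(\mu+\nu)(dy),
\]
so that $|\Psi(z)|\le \int_F\bigl|\int_F \mathcal K(x,y,z)\,\Phi(dx)\bigr|\,(\mu+\nu)(dy)$, where I write $\Phi(dx)$ for the Stieltjes measure $(\mu-\nu)(dx)$.

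Integrating over $z$, applying Fubini, and using hypothesis \eqref{w:stab1} for each fixed $y$ together with $(\mu+\nu)(F)=2$, I get
\[
d(\mathcal P\mu,\mathcal P\nu)=\int_F|\Psi(z)|\,dz
\le \int_F\left(\int_F\left|\int_F \mathcal K(x,y,z)\,\Phi(dx)\right|\,dz\right)(\mu+\nu)(dy)
< \int_F\frac12\left(\int_F|\Phi(x)|\,dx\right)(\mu+\nu)(dy)=\int_F|\Phi(x)|\,dx,
\]
which by Lemma \ref{waslem} is exactly $d(\mu,\nu)$. (One small care point: \eqref{w:stab1} is a strict inequality, so to keep strictness after integrating against $(\mu+\nu)$ I use that the integrand in $y$ is bounded above by $\tfrac12\int_F|\Phi(x)|\,dx$ with equality on at most a null set of $y$'s — or, more simply, that $\int_F|\Phi|\,dx$ is a positive finite number, so strict inequality for a.e.\ $y$ times a positive measure gives strict inequality.) This proves \eqref{w:stab2}.

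For the last assertion I would set up \eqref{rownstab} in the mild (Duhamel) form $\mu_t=e^{-t}\mu_0+\int_0^t e^{-(t-s)}\mathcal P\mu_s\,ds$ on the complete metric space $(\mathcal{M}_{1,q},d)$ — recall $\mathcal P$ maps $\mathcal{M}_{1,q}$ into itself by assumption \eqref{srednia2}, as computed in Section~\ref{ss: gr}, and $(\mathcal{M}_{1,q},d)$ is complete. Since $\mathcal P$ is (weakly) contracting by \eqref{w:stab2} and in particular $1$-Lipschitz, the map $\mu_\cdot\mapsto\bigl(t\mapsto e^{-t}\mu_0+\int_0^t e^{-(t-s)}\mathcal P\mu_s\,ds\bigr)$ is a contraction on $C([0,T],\mathcal{M}_{1,q})$ for $T$ small (with Lipschitz constant $1-e^{-T}<1$), and gluing solutions gives a unique global solution $\mu_t$, $t\ge 0$, in $\mathcal{M}_{1,q}$; uniqueness in $\mathcal{M}_{1,q}$ also follows from Theorem~\ref{istjednozn} applied in the total variation setting once one checks the first moment is preserved. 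The main obstacle, and the only step requiring genuine care, is the symmetrization/telescoping manipulation and the verification that the strict inequality in \eqref{w:stab1} survives the integration against $\mu+\nu$; everything after that is a routine Banach fixed-point argument.
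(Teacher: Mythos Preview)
Your argument is correct and mirrors the paper's proof almost exactly: the paper uses the same symmetrization $\mathcal P\mu-\mathcal P\nu=2\int_F\int_F K(x,y,\cdot)\,(\mu-\nu)(dx)\,\bar\mu(dy)$ with $\bar\mu=(\mu+\nu)/2$, then applies Lemma~\ref{waslem} and hypothesis~\eqref{w:stab1} pointwise in $y$ before integrating against $\bar\mu$---precisely your telescoping/symmetry step written with $(\mu+\nu)$ instead of $2\bar\mu$. The paper's proof actually stops after establishing~\eqref{w:stab2} and does not spell out the existence/uniqueness clause, so your Duhamel fixed-point sketch in $(\mathcal M_{1,q},d)$ adds detail the paper leaves implicit; note also that your concern about strict inequality surviving integration is unnecessary, since~\eqref{w:stab1} holds for \emph{every} $y$ and $\bar\mu$ is a nonzero positive measure, which immediately forces strictness of the integrated bound.
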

\begin{proof}
Since $K(x,y,\cdot)=K(y,x,\cdot)$, we can write 
\[
\mathcal{P}\mu-\mathcal{P}\nu=
2\int_F\int_F K(x,y,\cdot)\,(\mu-\nu)(dx)\,\bar \mu(dy),
\]
where $\bar\mu=(\mu+\nu)/2$.
If  $\Phi(x)$ is a cumulative distribution function of $\mu-\nu$, then
the signed measure $\mathcal{P}\mu-\mathcal{P}\nu$ 
has the cumulative distribution function of the form
\[
2\int_F\int_F \mathcal K(x,y,z)\,\Phi(dx)\,\bar \mu(dy).
\]
Hence
\begin{align*}
d(\mathcal{P}\mu,\mathcal{P}\nu)&=2\int_F \left|\int_F\int_F \mathcal  K(x,y,z)\,\Phi(dx)\,\bar \mu(dy)\right| dz\\
&\le 2\int_F \int_F\left|\int_F \mathcal K(x,y,z)\,\Phi(dx)\right|\,dz\, \bar\mu(dy)\\
&<\int_F \int_F\left|\Phi(x)\right|\,dx\,\bar\mu(dy)
=\int_F\left|\Phi(x)\right|\,dx=d(\mu,\nu)
. \qedhere
\end{align*}
\end{proof}

\begin{lem}
\label{pr:lc2}
Suppose that condition (i) of Theorem~$\ref{limitthm}$ is fulfilled. Then  condition $(\ref{w:stab1})$ holds.
 \end{lem}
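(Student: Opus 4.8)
The plan is to deduce (\ref{w:stab1}) from condition (i) by an integration by parts in $x$, after which the fact that every measure in $\mathcal M_{1,q}$ has the same first moment $q$ supplies exactly the gain needed to absorb the factor $2$ on the left-hand side of (\ref{w:stab1}).

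Fix $y\in F$ and $\Phi\in\mathcal F_q$ with $\Phi\not\equiv 0$, say $\Phi=\Phi_\mu-\Phi_\nu$ for $\mu,\nu\in\mathcal M_{1,q}$. Since $\Phi$ is the distribution function of the signed measure $(\mu-\nu)(dx)$, which is carried by $F$ and has total mass $0$ (so $\Phi$ decays to $0$ at $\pm\infty$), and since $\mathcal K(\cdot,y,z)$ is absolutely continuous with $|\mathcal K|\le 1$ by condition (i), integration by parts yields, for each fixed $z$,
\[
\int_F\mathcal K(x,y,z)\,\Phi(dx)=-\int_F\frac{\partial}{\partial x}\mathcal K(x,y,z)\,\Phi(x)\,dx .
\]
The decisive elementary remark is that $\int_F\Phi(x)\,dx=0$: since $\mu$ and $\nu$ share the first moment $q$, we have $\int_F z\,(\mu-\nu)(dz)=0$, and integrating $\Phi$ over $F$ (using $\Phi(\pm\infty)=0$) turns this into $\int_F\Phi=0$.

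Next I would write $\Phi^+=\max(\Phi,0)$ and $\Phi^-=\max(-\Phi,0)$, so that $\Phi=\Phi^+-\Phi^-$, $|\Phi|=\Phi^++\Phi^-$, and, by the remark, $\int_F\Phi^+=\int_F\Phi^-=m$, where $2m=\int_F|\Phi(x)|\,dx>0$ (strict, as $\Phi$ is right-continuous and not identically $0$). Then $P^{\pm}(dx):=m^{-1}\Phi^{\pm}(x)\,dx$ are probability measures on $F$ sharing the normalization $m$, so splitting $\int_F(\partial_x\mathcal K)\Phi\,dx=\int_F(\partial_x\mathcal K)\Phi^+\,dx-\int_F(\partial_x\mathcal K)\Phi^-\,dx$ converts the previous identity into
\[
\int_F\mathcal K(x,y,z)\,\Phi(dx)=-m\int_F\int_F\Big(\tfrac{\partial}{\partial x}\mathcal K(x,y,z)-\tfrac{\partial}{\partial x}\mathcal K(x',y,z)\Big)\,P^+(dx)\,P^-(dx') ,
\]
where now only increments of $\partial_x\mathcal K$ in $x$ appear. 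Taking absolute values, integrating in $z$ over $F$, applying Tonelli to interchange the integrations, and then invoking (\ref{warstab2}) for each pair $(x,x')$, I obtain
\begin{multline*}
\int_F\Big|\int_F\mathcal K(x,y,z)\,\Phi(dx)\Big|\,dz\\
\le m\int_F\int_F\Big(\int_F\Big|\tfrac{\partial}{\partial x}\mathcal K(x,y,z)-\tfrac{\partial}{\partial x}\mathcal K(x',y,z)\Big|\,dz\Big)\,P^+(dx)\,P^-(dx')<m ,
\end{multline*}
the last inequality being strict because the inner $z$-integral is everywhere $<1$ and $P^+\otimes P^-$ is a probability measure. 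Multiplying by $2$ gives exactly (\ref{w:stab1}).

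Two points still need care, and both are routine: justifying the integration by parts when $F$ is unbounded (legitimate since $|\mathcal K|\le 1$ and $\Phi$ decays at $\pm\infty$, the double integral in $(x,z)$ being finite by the final estimate), and deriving the strict inequality above from the pointwise strict bound in (\ref{warstab2}) after averaging against $P^+\otimes P^-$ (a strictly positive measurable function has strictly positive integral with respect to any probability measure). The genuinely substantive step --- and the one where I expect the argument to hinge --- is the identity $\int_F\Phi=0$ forced by $\Phi\in\mathcal F_q$; equivalently, that $\Phi^+$ and $\Phi^-$ carry equal mass $m$. It is precisely what lets the mass $2m=\int_F|\Phi|\,dx$ be replaced by the single mass $m$ in the estimate above, and so absorbs the $2$ in (\ref{w:stab1}); without a fixed first moment one obtains only $2\int_F|\cdots|\,dz<2\int_F|\Phi|\,dx$, which is vacuous.
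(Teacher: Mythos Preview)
Your proof is correct and follows essentially the same route as the paper: integrate by parts to replace $\mathcal K$ by $\partial_x\mathcal K$, use the common first moment to get $\int_F\Phi=0$ and hence $\int\Phi^+=\int\Phi^-=m$, then normalize and apply (\ref{warstab2}) to pairs of points. The paper compresses the last step into the single implication ``since $\Phi^+$ and $\Phi^-$ have the same integral, (\ref{warstab2}) implies (\ref{warstab3})'', whereas you make the underlying probability-measure trick with $P^{\pm}$ explicit; the arguments are the same in substance.
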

\begin{proof}
Take a $\Phi \in\mathcal F_q$
and denote $\Phi^+(x)=\max\{0,\Phi(x)\}$ and $\Phi^-(x)=\max\{0,-\Phi(x)\}$. 
Since $\Phi$ is the  cumulative distribution functions of a signed measure $\mu-\nu$, 
where $\mu,\nu\in \mathcal{M}_{1,q}$  we have
\[
\int_F\Phi(x)\,dx=-\int_F x\,\Phi(dx)=\int_F x\,\Phi_{\nu}(dx)-\int_F x\,\Phi_{\mu}(dx) =0,
\]
and, consequently, 
\begin{equation}
\label{p-m}
\int_F\Phi^+(x)\,dx=\int_F\Phi^-(x)\,dx=\frac12\int_F|\Phi(x)|\,dx.
\end{equation}
Since   $\Phi^+$ and $\Phi^-$ are nonnegative functions and have the same integral, condition
(\ref{warstab2}) implies
\begin{equation}
\label{warstab3}
\int_F \Big|\int_F\frac{\partial}{\partial x}\mathcal K(x,y,z)\Phi^+(x)\,dx
-\int_F\frac{\partial}{\partial x}\mathcal K(x,y,z)\Phi^-(x)\,dx\Big|\,dz< \int_F \Phi^+(x)\,dx.
\end{equation} 
Integrating $\int_F \mathcal K(x,y,z)\Phi(dx)$ by parts we obtain
\begin{equation}
\label{warstab4}
\int_F \mathcal K(x,y,z)\Phi(dx)=-\int_F \frac{\partial}{\partial x}\mathcal K(x,y,z)\Phi(x)\,dx
.
\end{equation}
From  (\ref{warstab3}) and (\ref{warstab4}) it follows 
\begin{equation*}
2\int_F\left|\int_F \mathcal  K(x,y,z)\Phi(dx)\right| \,dz <2\int_F \Phi^+(x)\,dx=\int_F |\Phi(x)|\,dx.
\qedhere
\end{equation*}
\end{proof}

\begin{lem}
\label{pr:lc3}
Assume that $d(\mathcal{P}\mu,\mathcal{P}\nu)<d(\mu,\nu)$
for all $\mu,\nu\in \mathcal{M}_{1,q}$, $\mu\ne \nu$.
Let $\mu_0,\nu_0\in \mathcal{M}_{1,q}$ and denote by 
$\mu_t$ and $\nu_t$, respectively, the solutions of 
equation $(\ref{ewol})$ in the space of 
probability Borel measures
on $F$. Then $\mu_t,\nu_t\in \mathcal{M}_{1,q}$ for $t\ge 0$
and $d(\mu_t,\nu_t)<d(\mu_r,\nu_r)$ for $0\le r<t\le T$ provided that
$\mu_T\ne \nu_T$. 
 \end{lem}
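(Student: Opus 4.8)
The plan is to reduce both assertions to a single integral inequality for the function $D(t):=d(\mu_t,\nu_t)$. First I would note that, as observed in Section~\ref{ss: gr}, the operator $\mathcal P$ sends a probability measure with finite first moment $q$ to another with the same first moment, while the bound $\int_F|z|\,(\mathcal P\mu)(dz)\le c_1+(c_2+c_3)\int_F|z|\,\mu(dz)$ coming from~(\ref{srednia1}) and~(\ref{srednia2}), fed into the Duhamel formula $\mu_t=e^{-t}\mu_0+\int_0^t e^{-(t-s)}\mathcal P\mu_s\,ds$ for equation~(\ref{ewol}), yields via Gr\"onwall that $t\mapsto\int_F|z|\,\mu_t(dz)$ is finite and locally bounded; the analogous computation with $f(z)=z$ in place of $|z|$ shows this moment stays equal to $q$. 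Hence $\mu_t,\nu_t\in\mathcal M_{1,q}$ for all $t\ge0$, $D(t)$ is finite, and by Lemma~\ref{waslem} it equals $\int_F|\Phi_t(z)|\,dz$, where $\Phi_t$ is the cumulative distribution function of the signed measure $\mu_t-\nu_t$.

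Next I would establish the estimate
\begin{equation*}
D(t)\ \le\ e^{-(t-r)}D(r)+\int_r^t e^{-(t-s)}\,d(\mathcal P\mu_s,\mathcal P\nu_s)\,ds,\qquad 0\le r\le t.
\end{equation*}
Writing $\lambda_t:=\mu_t-\nu_t$, equation~(\ref{ewol}) reads $\lambda_t'=(\mathcal P\mu_t-\mathcal P\nu_t)-\lambda_t$ in the Banach space $(M(F),\norm{\cdot}_{TV})$, so the variation-of-constants formula gives $\lambda_t=e^{-(t-r)}\lambda_r+\int_r^t e^{-(t-s)}(\mathcal P\mu_s-\mathcal P\nu_s)\,ds$. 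Applying to both sides the bounded linear functional $\lambda\mapsto\lambda\big(F\cap(-\infty,z]\big)$ yields the same identity for cumulative distribution functions, pointwise in $z$; taking absolute values, integrating over $z\in F$, interchanging the order of integration by Tonelli's theorem, and invoking Lemma~\ref{waslem} once more produces the displayed inequality. Since $\mathcal P\mu=\mathcal P\nu$ when $\mu=\nu$ and $d(\mathcal P\mu,\mathcal P\nu)<d(\mu,\nu)$ otherwise, we always have $d(\mathcal P\mu_s,\mathcal P\nu_s)\le D(s)$; substituting this and applying Gr\"onwall's lemma to $\phi(t):=e^{t}D(t)$ shows that $D$ is non-increasing on $[0,\infty)$.

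For the strict decay, assume $\mu_T\ne\nu_T$. Then uniqueness of solutions of~(\ref{rownstab}) (Theorem~\ref{istjednozn}) forces $\mu_s\ne\nu_s$ for every $s\in[0,T]$, since otherwise $\mu_\cdot$ and $\nu_\cdot$ would coincide on $[s,\infty)$, in particular at $T$. Fix $0\le r<t\le T$. On $[r,t]$ the strict contractivity hypothesis together with the monotonicity just proved gives $d(\mathcal P\mu_s,\mathcal P\nu_s)<d(\mu_s,\nu_s)=D(s)\le D(r)$; as $s\mapsto d(\mathcal P\mu_s,\mathcal P\nu_s)$ is measurable and this strict inequality holds throughout the interval $[r,t]$ of positive length, $\int_r^t e^{-(t-s)}d(\mathcal P\mu_s,\mathcal P\nu_s)\,ds<\int_r^t e^{-(t-s)}D(r)\,ds=\big(1-e^{-(t-r)}\big)D(r)$. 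Inserting this into the estimate of the previous paragraph gives $D(t)<e^{-(t-r)}D(r)+\big(1-e^{-(t-r)}\big)D(r)=D(r)$, which is the claim.

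The step I expect to be the real work is the first one: verifying that the flow keeps a finite first moment equal to $q$, so that the Wasserstein machinery applies at all, and carefully justifying the pointwise-in-$z$ reading of the variation-of-constants formula and the Tonelli interchange leading to the key estimate. Once that estimate is in hand, the contraction input is immediate from the hypothesis, and the passage from ``non-increasing'' to ``strictly decreasing on $[0,T]$'' is soft.
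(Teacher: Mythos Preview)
Your proof is correct and follows essentially the same route as the paper: both derive the variation-of-constants identity $\mu_t=e^{r-t}\mu_r+\int_r^t e^{s-t}\mathcal P\mu_s\,ds$, deduce the integral estimate $D(t)\le e^{r-t}D(r)+\int_r^t e^{s-t}d(\mathcal P\mu_s,\mathcal P\nu_s)\,ds$, feed in the strict contractivity of $\mathcal P$, and finish with Gr\"onwall. The only differences are expository: you justify the integral estimate explicitly via cumulative distribution functions and Lemma~\ref{waslem} (the paper simply asserts it), and you proceed in two steps---first proving $D$ is non-increasing, then using $d(\mathcal P\mu_s,\mathcal P\nu_s)<D(s)\le D(r)$ to integrate directly---whereas the paper inserts the pointwise strict inequality $d(\mathcal P\mu_s,\mathcal P\nu_s)<D(s)$ into the integral and applies a strict form of Gr\"onwall in one stroke.
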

\begin{proof}
Since $\mathcal{P}(\mathcal{M}_{1,q})\subset \mathcal{M}_{1,q}$
every solution of (\ref{ewol}) with the initial value from the set $\in\mathcal{M}_{1,q}$ remains in this
set for all $t\ge 0$.  Any solution $\mu_t$ of (\ref{ewol}) satisfies the following integral equation
\begin{equation}
\label{stabl1}
\mu_t=e^{r-t}\mu_r+\int_r^t e^{s-t}\mathcal{P}\mu_s\,ds.
\end{equation}     
Let $\mu_t$ and $\nu_t$ be solutions of  (\ref{ewol}) with values in
$\mathcal{M}_{1,q}$ and such that $\mu_T\ne \nu_T$.
Then $\mu_t\ne \nu_t$ for $t\le T$ and from 
(\ref{stabl1}) it follows that
\begin{align*}
d(\mu_t,\nu_t)&\le e^{r-t}d(\mu_r,\nu_r)+\int_r^t e^{s-t}d(\mathcal{P}\mu_s,\mathcal{P}\nu_s)\,ds\\
&<e^{r-t}d(\mu_r,\nu_r)+\int_r^t e^{s-t}d(\mu_s,\nu_s)\,ds
\end{align*}
for  $0\le r<t\le T$. Let $\alpha(s)=e^sd(\mu_s,\nu_s)$. Then
\[
\alpha(t)<\alpha(r)+\int_r^t\alpha(s)\,ds
\]
and from 
Gronwall's lemma it follows that 
$\alpha(t)<\alpha(r)e^{t-r}$, which gives
$d(\mu_t,\nu_t)<d(\mu_r,\nu_r)$.  
\end{proof}

\begin{lem}
\label{l:rc}
Assume that condition (ii) of Theorem~$\ref{limitthm}$ is fulfilled.
Then for every initial measure   $\mu_0\in\mathcal M_{\alpha,q}$ its orbit $\mathcal O(\mu_0)$ is a relatively compact subset of  
$\mathcal M_{1,q}$. Moreover,  ${\rm cl\,}\mathcal O(\mu_0)\subset \mathcal M_{\alpha,q}$, where 
${\rm cl\,}\mathcal O(\mu_0)$ denotes the closure of $\mathcal O(\mu_0)$ in $(\mathcal M_{1,q},d)$.  
\end{lem}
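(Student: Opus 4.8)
The plan is to find a bounded, forward‑invariant set containing the whole orbit. Fix $\mu_0\in\mathcal M_{\alpha,q}$, write $h(t)=\int_F|x|^\alpha\mu_t(dx)$, put $m=\max\{h(0),\,C/(1-L)\}$, and consider $\widetilde{\mathcal M}=\{\mu\in\mathcal M_{\alpha,q}\colon\int_F|x|^\alpha\mu(dx)\le m\}$, which is relatively compact in $\mathcal M_{1,q}$ by the discussion at the end of Subsection~\ref{W-d}. The whole statement then reduces to the a priori bound $h(t)\le m$ for all $t\ge 0$, i.e.\ $\mathcal O(\mu_0)\subset\widetilde{\mathcal M}$. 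Indeed, granting this, $\mathcal O(\mu_0)$ is relatively compact in $\mathcal M_{1,q}$; and if $\nu\in{\rm cl\,}\mathcal O(\mu_0)$, then $d(\mu_{t_n},\nu)\to 0$ for some sequence $t_n$, hence $\mu_{t_n}\to\nu$ weakly, so the weak lower semicontinuity of the functional $\mu\mapsto\int_F|x|^\alpha\mu(dx)$ gives $\int_F|x|^\alpha\nu(dx)\le m<\infty$; since also $\nu\in\mathcal M_{1,q}$, we obtain ${\rm cl\,}\mathcal O(\mu_0)\subset\mathcal M_{\alpha,q}$.

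To prove $h(t)\le m$ I would first record two consequences of the hypotheses. By the moment computations of Subsection~\ref{ss: gr} together with condition~(ii), that is inequality~(\ref{zwartosc}), the operator $\mathcal P$ maps $\mathcal M_{\alpha,q}$ into itself and $\int_F|x|^\alpha\mathcal P\mu(dx)\le C+L\int_F|x|^\alpha\mu(dx)$; since $C+Lm\le m$, the set $\widetilde{\mathcal M}$ is $\mathcal P$-invariant. Next I would use the integral form of equation~(\ref{rownstab}), namely $\mu_t=e^{-t}\mu_0+\int_0^t e^{-(t-s)}\mathcal P\mu_s\,ds$ (take $r=0$ in~(\ref{stabl1})), which represents $\mu_t$ as a mixture of $\mu_0$ and the measures $\mathcal P\mu_s$, $s\in[0,t]$. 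Running the Picard scheme $\mu^{(0)}_t\equiv\mu_0$, $\mu^{(n+1)}_t=e^{-t}\mu_0+\int_0^t e^{-(t-s)}\mathcal P\mu^{(n)}_s\,ds$ — which converges to $\mu_t$ uniformly on an interval $[0,T_0]$ in the norm $\norm{\cdot}_{TV}$, with length $T_0$ depending only on $\norm{\mu_0}_{TV}=1$ (here $\mathcal P$ is Lipschitz on the set of probability measures equipped with $\norm{\cdot}_{TV}$, which is what makes the contraction of Lemma~\ref{pr:lc} work) — a straightforward induction shows $\mu^{(n)}_t\in\widetilde{\mathcal M}$ for every $n$ and every $t\in[0,T_0]$: each iterate is a mixture of probability measures with mean $q$ and $\alpha$-moment $\le m$, and such a mixture inherits all three properties. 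Passing to the limit and using once more the weak lower semicontinuity of the $\alpha$-moment gives $h(t)\le m$ on $[0,T_0]$. Since $T_0$ can be taken uniform (all measures are probability measures, so $\mu_t(F)\equiv 1$) and the restarted threshold $\max\{h(T_0),C/(1-L)\}$ is again $\le m$, the bound extends by concatenation to all $t\ge 0$.

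The main obstacle — and the reason for passing through the Picard iterates rather than simply differentiating $h$ — is a genuine circularity in condition~(ii): it may be invoked only for measures already known to belong to $\mathcal M_{\alpha,q}$, whereas a priori the solution supplied by Theorem~\ref{istjednozn} and Lemma~\ref{pr:lc} is only known to take values in $\mathcal M_{1,q}$, and $\mathcal P$ does not in general control the $\alpha$-moment pointwise in the parental traits, so an instantaneous blow‑up of $h$ cannot be excluded by soft arguments. The inductive construction breaks the circle because at the $n$-th stage the membership $\mu^{(n)}_s\in\mathcal M_{\alpha,q}$ needed to apply~(\ref{zwartosc}) is delivered by the $(n-1)$-st stage, the induction being anchored at the finiteness of $\int_F|x|^\alpha\mu_0(dx)$; weak lower semicontinuity then transports the uniform bound to the limit, so the $\alpha$-moment is never required to be continuous under $\norm{\cdot}_{TV}$- or $d$-convergence.
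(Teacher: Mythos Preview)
Your proof is correct and reaches the same bound $h(t)\le m=\max\{h(0),C/(1-L)\}$ as the paper, but the route is genuinely different. The paper integrates $|x|^\alpha$ against the variation-of-constants formula~(\ref{stabl1}), applies~(\ref{zwartosc}) directly to $\mathcal P\mu_s$, and closes with Gronwall on $\gamma(t)=e^t\big(\beta(t)-\tfrac{C}{1-L}\big)$. You instead establish the bound at the level of the Picard iterates by the $\mathcal P$-invariance of $\widetilde{\mathcal M}$, and transport it to the limit via weak lower semicontinuity of $\mu\mapsto\int|x|^\alpha\,d\mu$; the uniform step size and concatenation then globalize. What your approach buys is exactly what you explain in your last paragraph: the paper's Gronwall argument silently uses~(\ref{zwartosc}) at every $\mu_s$, which presupposes $\mu_s\in\mathcal M_{\alpha,q}$ --- the very conclusion to be proved --- whereas your induction on iterates makes this legitimate step by step. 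The paper's argument is shorter but tacitly assumes finiteness of the $\alpha$-moment along the orbit; yours fills that gap.

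One small inaccuracy: the parenthetical ``which is what makes the contraction of Lemma~\ref{pr:lc} work'' points to the wrong fact. Lemma~\ref{pr:lc} concerns the Wasserstein contraction $d(\mathcal P\mu,\mathcal P\nu)<d(\mu,\nu)$, not the total-variation Lipschitz property you actually need. The relevant (and elementary) estimate is $\|\mathcal P\mu-\mathcal P\nu\|_{TV}\le 2\|\mu-\nu\|_{TV}$ for probability measures, obtained from $\mu\otimes\mu-\nu\otimes\nu=(\mu-\nu)\otimes\bar\mu+\bar\mu\otimes(\mu-\nu)$ and $\|K(x,y,\cdot)\|_{TV}=1$; this is what makes the Picard map a contraction on $C([0,T_0],\mathrm{Prob}(F))$ for $T_0$ small enough, uniformly in the (probability) initial datum. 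With that correction your argument is complete.
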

\begin{proof} 
We take a $\mu_0\in\mathcal M_{\alpha,q}$ and define $\beta(t)=\int_F |x|^{\alpha} \mu_t(dx)$.
From (\ref{zwartosc}) and (\ref{stabl1}) with $r=0$ it follows
\begin{equation}
\label{zwart2}
\beta(t)\le e^{-t}\beta(0)+\int_0^t e^{s-t}(C+L\beta(s))\,ds.
\end{equation}     
We define  $\gamma(t)=e^t\big(\beta(t)-\frac{C}{1-L}\big)$. Then
(\ref{zwart2}) implies that
\begin{equation}
\label{zwart1}
\gamma(t)\le \gamma(0)+L\int_0^t \gamma(s)\,ds,
\end{equation}     
and again according to Gronwall's lemma  $\gamma(t)\le \gamma(0)e^{Lt}$,
which gives
\[
\beta(t)\le  \frac{C}{1-L}+ \Big(\beta(0)-\frac{C}{1-L}\Big)e^{(L-1)t}\le 
\max
\Big\{\beta(0),\frac{C}{1-L}\Big\}.
\]
Thus, there exists $m>0$  depending on $\mu_0$ and $\alpha>1$ 
such that  $\int_F |x|^{\alpha} \mu_t(dx)\le m$ for $t\ge 0$. Consequently the orbit is a relatively compact subset of $\mathcal{M}_{1,q}$, and moreover ${\rm cl\,}\mathcal O(\mu_0)\subset \mathcal M_{\alpha,q}$.
\end{proof}

Let $\{S(t)\}_{t\ge 0}$ be a family of transformations of $\mathcal M_{1,q}$ defined by
$S(t)\mu_0=\mu_t$, where
$\mu_t$ is the solution of  $(\ref{ewol})$ with the initial condition $\mu_0$.
For $\mu\in \mathcal M_{1,q}$ we define 
the $\omega$-\textit{limit set} by 
\[
\omega(\mu)=\big\{\nu\colon \nu=\lim_{n\to\infty}\mu_{t_n}\textrm{ for a sequence $(t_n)_{n\in\mathbb N}$ with 
$\lim_{n\to\infty}t_n=\infty$}\big\}.   
\]
\begin{proof}[Proof of Theorem~$\ref{limitthm}$]       
Take a measure $\mu\in\mathcal M_{\alpha,q}$. According to Lemma~\ref{l:rc} 
the orbit of $\mu$ is a relatively compact subset  of  
$\mathcal M_{1,q}$. From this it follows that $\omega(\mu)$ is a nonempty compact set and for $t>0$ we have  
$S(t)(\omega(\mu))=\omega(\mu)$.
First we check that  $\omega(\mu)$ is a singleton.
Indeed, if  $\omega(\mu)$ has more than one element, then since   
$\omega(\mu)$ is a compact set, we can find two elements $\nu_1$ and $\nu_2$ 
in $\omega(\mu)$ with the maximal distance $d(\nu_1,\nu_2)$.
But since $S(t)(\omega(\mu))=\omega(\mu)$, then for given $t>0$ there exist     
$\bar\nu_1$ and $\bar \nu_2$ in  $\omega(\mu)$ such that $S(t)\bar\nu_1=\nu_1$  
and $S(t)\bar\nu_2=\nu_2$. Now from condition (i) and Lemmas~\ref{pr:lc}, \ref{pr:lc2}, and \ref{pr:lc3}
 it follows that  
\[
d(\nu_1,\nu_2)=
d(S(t)\bar\nu_1,S(t)\bar\nu_2)<
d(\bar\nu_1,\bar\nu_2),
\]
which contradicts the definition of $\nu_1$ and $\nu_2$.
Let  $\omega(\mu)=\{\mu^*\}$. 
Then $S(t)\mu^*=\mu^*$ for $t\ge 0$ and, consequently, $\mathcal P\mu^*=\mu^*$.
Since the orbit $\mathcal O(\mu)$ 
is relatively compact, we have $\lim_{t\to\infty} S(t)\mu=\mu^*$. 
According to Lemmas~\ref{pr:lc} and \ref{pr:lc2} the operator $\mathcal P$ has only one fixed point
what means that the limit $\lim_{t\to\infty} S(t)\mu$ does not depend on 
$\mu\in\mathcal M_{\alpha,q}$.   
Now, we consider a measure $\mu \in\mathcal M_{1,q}$.
The set $\mathcal M_{\alpha,q}$ is dense in the space $\mathcal M_{1,q}$.
Thus, for every $\varepsilon>0$ we can find $\bar \mu\in \mathcal M_{\alpha,q}$ such that
$d(\mu,\bar\mu)< \varepsilon$. Moreover, since 
$\lim_{t\to\infty} S(t)\bar\mu=\mu^*$ we find $t_{\varepsilon}$ such that
 $d(S(t)\bar\mu,\mu^*)< \varepsilon$  for $t\ge  t_{\varepsilon}$.
 Since the operators $S(t)$ are contractions we have 
\[
d(S(t)\mu,\mu^*)\le d(S(t)\mu,S(t)\bar\mu)+d(S(t)\bar\mu,\mu^*)<2\varepsilon 
 \]  
 for  $t\ge  t_{\varepsilon}$, which completes the proof.
\end{proof}


We can strengthen the thesis of Theorem~\ref{limitthm}, if we additionally assume that for all $x,y\in F$ the measure $K(x,y,dz)$ has the density $k(x,y,z)$ and $k$ is a bounded and continuous function.
\begin{thm}
\label{limitthm2} 
Assume that $k$ is a bounded and continuous function, and $k$ satisfies  assumptions of Theorem~{\rm{\ref{limitthm}}}.
Then the stationary measure $\mu^*$ is absolutely continuous with respect to the Lebesgue measure 
 and has a continuous  and bounded density $u_*(x)$. Moreover,
for every $\mu_0\in\mathcal{M}_{1,q}$ the solution $\mu_t$ of equation $(\ref{rownstab})$  can be written in the form
$\mu_t=e^{-t}\mu_0+\nu_t$, where $\nu_t$ are  absolutely continuous measures, have continuous  and bounded densities  $v_t(x)$,
which converge uniformly to  $u_*(x)$.
\end{thm}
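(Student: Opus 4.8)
The plan is to read off the absolutely continuous part of $\mu_t$ directly from the variation-of-constants formula, and then to bootstrap the weak convergence already supplied by Theorem~\ref{limitthm} up to uniform convergence of densities, using Scheff\'e's lemma as the engine.

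\emph{Step 1 (the decomposition and regularity).} Formula (\ref{stabl1}) with $r=0$ gives $\mu_t=e^{-t}\mu_0+\nu_t$ with $\nu_t:=\int_0^t e^{s-t}\mathcal P\mu_s\,ds$. Since $K(x,y,dz)=k(x,y,z)\,dz$ with $k$ bounded and continuous, $\mathcal P\mu_s$ is absolutely continuous with density $w_s(z)=\int_F\int_F k(x,y,z)\,\mu_s(dx)\,\mu_s(dy)$; dominated convergence (dominating function $\|k\|_\infty$, integrated against the finite measure $\mu_s\otimes\mu_s$) makes $w_s$ continuous, with $0\le w_s\le\|k\|_\infty$ and $\int_F w_s\,dz=\mu_s(F)^2=1$. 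Continuity of $t\mapsto\mu_t$ in total variation (Theorem~\ref{istjednozn}) makes $s\mapsto w_s(z)$ continuous, so by Fubini $\nu_t$ has the density $v_t(z)=\int_0^t e^{s-t}w_s(z)\,ds$, again continuous and bounded by $\|k\|_\infty$, with $\int_F v_t\,dz=1-e^{-t}$. The same computation applied to the fixed point $\mu^*=\mathcal P\mu^*$ shows $\mu^*$ is absolutely continuous with continuous bounded density $u_*(z)=\int_F\int_F k(x,y,z)\,\mu^*(dx)\,\mu^*(dy)$ and $\int_F u_*\,dz=1$. This already gives the first assertion of the theorem.

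\emph{Step 2 (uniform convergence).} By Theorem~\ref{limitthm}, $d(\mu_t,\mu^*)\to 0$, hence $\mu_t\to\mu^*$ weakly, hence $\mu_t\otimes\mu_t\to\mu^*\otimes\mu^*$ weakly. Since $(x,y)\mapsto k(x,y,z)$ is bounded and continuous for each fixed $z$, this yields $w_s(z)\to u_*(z)$ pointwise. As $w_s,u_*\ge 0$ and $\int_F w_s=\int_F u_*=1$, Scheff\'e's lemma gives $\|w_s-u_*\|_{L^1}\to 0$; since $\int_0^t e^{s-t}a(s)\,ds\to 0$ for any bounded $a$ with $a(s)\to 0$, the identity $v_t-u_*=\int_0^t e^{s-t}(w_s-u_*)\,ds-e^{-t}u_*$ then gives $\|v_t-u_*\|_{L^1}\to 0$. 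Next I would expand $\mu_t\otimes\mu_t$ through $\mu_t=e^{-t}\mu_0+\nu_t$ and subtract $\mu^*\otimes\mu^*=u_*(x)u_*(y)\,dx\,dy$; together with the split $v_t(x)v_t(y)-u_*(x)u_*(y)=(v_t(x)-u_*(x))v_t(y)+u_*(x)(v_t(y)-u_*(y))$ and $\|v_t\|_{L^1}\le 1$, this yields, uniformly in $z$,
\[
|w_t(z)-u_*(z)|\le 3\|k\|_\infty e^{-t}+2\|k\|_\infty\,\|v_t-u_*\|_{L^1}.
\]
Hence $\|w_t-u_*\|_\infty\to 0$, and finally $\|v_t-u_*\|_\infty\le\int_0^t e^{s-t}\|w_s-u_*\|_\infty\,ds+e^{-t}\|u_*\|_\infty\to 0$, which is the claimed uniform convergence.

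Step~1 is routine bookkeeping with the Duhamel formula and dominated convergence. The substantive point is the $L^1\to L^\infty$ upgrade in Step~2, and the hard part is precisely the mechanism that makes it go through: $w_t=\mathcal P\mu_t$ is obtained from $\mu_t$ by integration against the bounded continuous kernel $k$, so any $L^1$ control of $v_t-u_*$ is automatically promoted to a sup control of $w_t-u_*$, which then feeds back into $v_t=\int_0^t e^{s-t}w_s\,ds$. I would also make explicit the minor continuity and measurability statements about $s\mapsto w_s(z)$ that legitimize the time integrals; these come from continuity of $t\mapsto\mu_t$ in $\|\cdot\|_{TV}$ (Theorem~\ref{istjednozn}).
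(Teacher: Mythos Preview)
Your proof is correct. Both your argument and the paper's share the same backbone: the Duhamel representation $\mu_t=e^{-t}\mu_0+\int_0^t e^{s-t}\mathcal P\mu_s\,ds$, the observation that each $\mathcal P\mu_s$ has a bounded continuous density $w_s$ (the paper writes $\bar u_s$), and the conclusion that $v_t\to u_*$ uniformly once one knows $w_s\to u_*$ uniformly. The genuine difference is in how that last uniform convergence is obtained. The paper asserts in one line that, since $s\mapsto\mu_s$ is continuous in $\mathcal M_{1,q}$ with limit $\mu^*$, the map $s\mapsto\bar u_s$ is continuous into $C_b(F)$ with limit $u_*$; but Wasserstein convergence of $\mu_s$ to $\mu^*$ does not by itself yield sup-norm convergence of the densities of $\mathcal P\mu_s$ when $k$ is merely bounded and continuous (it would under, say, uniform continuity of $k$, or under total-variation convergence of $\mu_s$, which is \emph{not} what Theorem~\ref{limitthm} provides as $s\to\infty$). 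Your route makes this step explicit and robust: pointwise convergence $w_s(z)\to u_*(z)$ from weak convergence of the product measures, promotion to $L^1$ by Scheff\'e, and then an $L^1\!\to\! L^\infty$ bootstrap via
\[
|w_t(z)-u_*(z)|\le C_1 e^{-t}+C_2\|v_t-u_*\|_{L^1},
\]
obtained by expanding $\mu_t\otimes\mu_t$ after substituting $\mu_t=e^{-t}\mu_0+\nu_t$. This is longer than the paper's paragraph, but it is self-contained, uses no regularity of $k$ beyond boundedness and continuity, and in effect fills in the step the paper's proof leaves unjustified.
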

\begin{proof}
Since $k$ is a continuous and bounded function and $\mu^*$ is a probability measure, 
\[
u_*(z):=\int_{F}\int_{F}k(x,y,z)\, \mu^*(dx)\,\mu^*(dy)
\]
is a continuous bounded function and $u_*$ is a density of $\mu^*$ because  $\mu^*$ is a fixed point of  the operator $\mathcal{P}$.
For any initial measure $\mu_0\in\mathcal{M}_{1,q}$  the solution $\mu_t$ of  $(\ref{rownstab})$ 
satisfies the equation
\[
\mu_t=e^{-t}\mu_0+\int_0^t e^{s-t}\mathcal{P}\mu_s\,  ds.
\]
For each $s\ge 0$ the measure $\mathcal{P}\mu_s$ has a continuous and bounded density $\bar u_s(x)$.
Since the function $\varphi\colon [0,\infty)\to \mathcal{M}_{1,q}$ given by 
$\varphi(s)=\mu_s$ is continuous and $\lim_{s\to \infty}\mu_s=\mu^*$,
 the function $\psi\colon [0,\infty)\to C_b(F)$
given by 
$\psi (s)=\bar u_s$ is continuous and 
$\lim_{s\to \infty}\bar u_s=u_*$.
Thus   the measures $\nu_t=\int_0^t e^{s-t}\mathcal{P}\mu_s\,  ds$ 
have continuous  and bounded densities  $v_t(x)$ and $v_t$ converges uniformly to $u_*$
as $t\to\infty$.
\end{proof}

\subsection{Examples}
\label{ss:ex}
Now, we study two biologically reasonable forms of $K$, which satisfy  conditions (i) and (ii) of Theorem~\ref{limitthm}.
\begin{example} 
\label{e:1}
We suppose that if $x$ and $y$ are parential traits, then the phenotypic trait of the offspring is of the form 
\[
\frac{x+y}{2}+Z,
\]
where $Z$ is a $0$-mean random variable, $\E Z^2<\infty$  and $Z$ has a positive density $h$. 
Then
\begin{equation}
k(x,y,z)=h\left(z-\tfrac{x+y}{2}\right)
\end{equation}
and
\[
\frac{\partial}{\partial x}\mathcal K(x,y,z)=-\frac12 h\left(z-\tfrac{x+y}{2}\right).
\]
The condition (i) is equivalent to the inequality  
 \[
 \int_{-\infty}^{\infty}|h(z-\alpha)-h(z-\beta)|\,dz<2 
\]
for all $\alpha,\beta\in\mathbb R$,  which is a simple consequence of the assumption that $h$ is a
positive density. Now we check that condition (ii) holds with $\alpha=2$. We have
\begin{align*}
\int_{-\infty}^{\infty} z^2(\mathcal{P}\mu)(dz)&\le
\int_{-\infty}^{\infty}\int_{-\infty}^{\infty}\int_{-\infty}^{\infty} 
 \Big((z-\tfrac{x+y}{2})^2 +(z-\tfrac{x+y}{2})(x+y) +\tfrac{(x+y)^2}{4}\Big)  
\\
&\hskip2cm \times h\left(z-\tfrac{x+y}{2}\right)dz\,\mu(dx)\mu(dy)\\
&\le \E Z^2+\int_{-\infty}^{\infty}\int_{-\infty}^{\infty} \tfrac{(x+y)^2}{4}\mu(dx)\mu(dy)\\
&\le \E Z^2+\frac12 q^2+\frac12 \int_{-\infty}^{\infty} x^2\,\mu(dx).
\end{align*}

If we additionally assume that the density $h$ is a continuous function, then 
according to Theorem~\ref{limitthm2} 
the limit measure $\mu^*$ has a continuous and bounded density $u_*$, 
$\mu_t=e^{-t}\mu_0+v_t(x)\,dx$, and $v_t$ converges uniformly to 
$u_*$.
  
Now we determine 
the limiting distribution $\mu^*$. 
Densities  of  the measures $\mu^*$ and $\mu_0$ have the same first moment $q$ and
$u_*$ satisfies the equation
\begin{equation}
\label{eq:gi} 
u_*(z)=\int_\R\int_\R  h\Big(z-\frac{x+y}2\Big)\,u_*(x)\,u_*(y)\,dx\,dy. 
\end{equation}
Observe that if a  probability density $f$ satisfies  (\ref{eq:gi}) and $\int_\R xf(x)\,dx =0$,  then $f(x-q)$  also satisfies  
 (\ref{eq:gi}) and has the first moment $q$. Since  $u_*$ is a unique solution of
(\ref{eq:gi}) with the first moment $q$ we have $u_*(x)=f(x-q)$ for $x\in \R$.
Now we construct the density $f$. 
Consider an infinite sequence of i.i.d. random variables 
\[
Z_{01},Z_{11},Z_{12},Z_{21},\dots,Z_{24},Z_{31},\dots,Z_{38},\dots
\] 
with  the density $h$
and define the random variable 
\[
Y=Z_0+\frac{Z_{11}+Z_{12}}{2}+\frac{Z_{21}+\dots+Z_{24}}{4}+\frac{Z_{31}+\dots+Z_{38}}{8}+\dots \,\,.
\]
Then $\E Y=0$.  If $Y_1$ and $Y_2$ are  two independent copies of  $Y$ and 
$Z$ is a random variable with density $h$ independent of $Y_1$ and $Y_2$,
then 
\begin{equation}
\label{irw}
Y\overset d =Z+\frac{Y_1+Y_2}2.
\end{equation}
It means that the density $f$ of $Y$ satisfies (\ref{eq:gi}).
Let $h_n(x)=2^nh(2^nx)$ for $n\ge 0$ and $x\in \R$. From the definition of the random variable $Y$ it follows that
\[
f=h_0*h_1^{*2}*h_2^{*4}*h_3^{*8}*\dots,
\]
where  $f*g$ denotes the  convolution of the functions $f$ and $g$. 

From (\ref{irw}) it follows immediately that $\E Y^2=2\E Z^2$. For instance, if $Z$ has the normal distribution 
with the zero expectation and the  standard deviation $\sigma$, then 
$Y$ has also  the normal  distribution 
with the zero expectation and the  standard deviation $\sqrt{2}\sigma$.
\end{example}

\begin{example}
\label{e:3}
As in Example~\ref{e:1} we 
 suppose that if $x$ and $y$ are parential traits, then the phenotypic trait of the offspring is of the form 
\[
(x+y)Z,
\]
where $Z$ is a random variable 
with values in the interval $[0,1]$, 
and has a density $h$ such that 
\begin{equation}\label{tjonwuzal}
\int_0^\infty xh(x)\,dx=\frac{1}{2}.
\end{equation}
Then  $F=[0,\infty)$ and the function $k$ is the form
\begin{equation}
\label{tjonwu}
k(x,y,z)=\frac{1}{x+y}h\left(\frac{z}{x+y}\right)
\end{equation}
for $z\in [0,x+y]$ and $k(x,y,z)=0$ otherwise. 
Equation (\ref{ewol}) with the kernel $k$ given by
 (\ref{tjonwu}) is known as the \textit{general version of Tjon--Wu equation}.
If  $h=\1_{[0,1]}$, then this equation is  the Tjon--Wu version 
of the Boltzmann equation 
(see \cite{boby, krook, tjon}). 
The asymptotic stability  of the classical  Tjon--Wu equation
in $L^1$ 
space was proven by Kie{\l}ek (see \cite{kielek}).
Lasota and Traple (see \cite{lasota, lastrap}) proved stability  
in the general case  but in the sense of the weak convergence of measures.
If we assume additionally that the support of $h$ contains an interval $(0,\varepsilon)$, $\varepsilon>0$, then 
this result follows immediately from Theorem~\ref{limitthm}.   
Indeed, in that case one can easily compute
\[
\frac{\partial}{\partial x}\mathcal K(x,y,z)=-h\left(\frac{z}{x+y}\right)\frac{z}{(x+y)^2}.
\]
Now, condition (\ref{warstab2}) 
is equivalent to the inequality  
 \begin{equation}
 \label{e:ineq1}
 \int_{0}^{\infty}\big|h(\tfrac {z}{\alpha})\tfrac {z}{\alpha^2}-h(\tfrac {z}{\beta})\tfrac{z}{\beta^2}\big|\,dz<1 
\end{equation}
for all $\alpha,\beta>0$.
This inequality is a simple consequence of positivity of $h$ on the interval $(0,\varepsilon)$
and of the following condition   
\[
 \int_{0}^{\infty}h(\tfrac {z}{\alpha})\tfrac {z}{\alpha^2}\,dz=\int_{0}^1h(x)x\,dx=\tfrac12. 
\]
Now we check that the condition (ii) holds with $\alpha=2$. We have
\begin{align*}
\int_{-\infty}^{\infty} z^2(\mathcal{P}\mu)(dz)&\le
\int_{-\infty}^{\infty}\int_{-\infty}^{\infty}\int_{-\infty}^{\infty}
\frac{z^2}{x+y}h\left(\frac{z}{x+y}\right) dz\,\mu(dx)\mu(dy)\\
&\le 
\E Z^2 \int_{-\infty}^{\infty}\int_{-\infty}^{\infty}
(x+y)^2 \,\mu(dx)\mu(dy)\\
&\le 
\E Z^2\Big (2q^2+2 
 \int_{-\infty}^{\infty}
y^2 \,\mu(dy)\Big)\le  2q^2\E Z^2+L \int_{-\infty}^{\infty} y^2 \,\mu(dy),
\end{align*}
where $L=2\E Z^2$.
Since $0\le Z\le 1$, we have $L=2\E Z^2<2\E Z=1$. 
\end{example}

\begin{rem}
The kernel $k$ in Example~\ref{e:3} is not a continuous function even if  
 the density $h$ is a continuous and we cannot  apply directly
Theorem~\ref{limitthm2} in this case. But it not difficult to check that if $q>0$ then
$\mu^*(\{0\})=0$ and  to prove that the invariant measure $\mu^*$ has a density $u_*$
and $u_*$ is a continuous function on the interval $(0,\infty)$.
Moreover, repeating the proof of  Theorem~\ref{limitthm2} one can check that
$\mu_t=e^{-t}\mu_0+v_t(x)\,dx$, and $v_t$ converges uniformly to 
$u_*$ on the sets $[\varepsilon,\infty)$, $\varepsilon>0$. 
In particular, if we consider equation (\ref{ewol}) on the space of probability densities, then
every solution converges to
$u_*$ in $L^1[0,\infty)$.     
\end{rem}

\section{Conclusion}
\label{s:conclusion}
In the paper we presented some phenotype structured population  models with the sexual reproduction.
We consider both random and assortative mating. Our starting point is an individual-based model 
which clearly explains all interactions between individuals. A limit passage with the number of
individual to infinity leads to a macroscopic model which is a nonlinear evolution equation. We give some conditions
which guarantee the global existence of solutions, persistence of the population
and convergence of its size to some stable level. Next, we consider only a population with random mating      
and under suitable assumptions we prove that the phenotypic profile of the population converges to a stationary profile.

It would be interesting to study analytically long-time behavior of the phenotypic  profile of population with assortative mating.  
Some numerical results presented in the paper \cite{DBLD} suggest that also in this case one can expect convergence of the phenotypic  profile to multimodal  limit distributions. 
This result suggests that assortative mating can lead to polymorphic population and adaptive speciation.
We hope that our methods invented to asymptotic analysis of populations with random mating 
will be also useful in the case of assortative mating.
In order to do it, we probably need to modify the model of assortative mating (\ref{mating-asd2})  presented in Section~\ref{r_def}, because it has a disadvantage that the mating rate does not satisfy  the condition  $\sum_{j=1}^n m(x_i,x_j)= 1$ for all $i$.
We can construct a new model 
which corresponds to the same preference function $a(x,y)$ with  
a symmetric mating rate $m$ which  has  the above property.
In order to do this we look for constants $c_1,\dots,c_n$ depending on the state of population  such that 
\begin{equation}
\label{mating-as3}
m(x_i,x_j;\mathbf x)=(c_i+c_j)a(x_i,x_j)
\end{equation}
and $\sum_{j=1}^n m(x_i,x_j;\mathbf x)= 1$ for all $i$. In this way we obtain a system of linear equations for $c_1,\dots,c_n$:
\begin{equation}
\label{mating-mc}
\sum_{j=1}^n b_{ij}c_j=1,\quad \textrm{for $i=1,\dots,n$},
\end{equation}
where
$b_{ij}=a(x_i,x_j)$ for $i\ne j$ and $b_{ii}=a(x_i,x_i)+\sum_{l=1}^n a(x_i,x_l)$.
Since the matrix $[b_{ij}]$ has positive entries and the dominated main diagonal, system (\ref{mating-mc}) has a unique and positive solution.
Passage with the number of individuals to infinity leads 
to the following  mating rate
 \begin{equation}
\label{mating-asd3}
m(x,y;\mu)=(c(x;\mu)+c(y;\mu))a(x,y),
\end{equation}
where the function $c(x;\mu)$  depends on phenotypic distribution 
$\mu$, and satisfies the following Fredholm equation of the second kind
\begin{equation}
\label{Volt}
c(x;\mu)\int_F a(x,y)\,\mu(dy)+
\int_F c(y;\mu)a(x,y)\,\mu(dy)=1.
\end{equation}

One can introduce a general model which covers both a semi-random and assortative mating.   
Let $p(x)$ be the initial capability of mating
of an individual with the phenotypic trait $x$ and 
$a(x,y)$ be a symmetric nonnegative preference function. 
Now we can define a \textit{cumulative preference function} by 
$\bar a(x,y)=a(x,y)p(x)p(y)$.  The 
mating rate $m$ is a symmetric function given by (\ref{mating-as3})
with $a$ replaced  by  $\bar a$
and we assume that 
$\sum_{j=1}^n m(x_i,x_j;\mathbf x)=p(x_i)$ for each $i,j$. 
The mating rate in the infinitesimal model  is of the form
 \begin{equation}
\label{mating-asd3n}
m(x,y;\mu)=(c(x;\mu)+c(y;\mu))a(x,y)p(x)p(y),
\end{equation}
where the function $c(x;\mu)$ satisfies the following  equation
\begin{equation}
\label{Volt4}
c(x;\mu)\int_Fa(x,y)p(y)\,\mu(dy)+
\int_F c(y;\mu)a(x,y)p(y)\,\mu(dy)=1.
\end{equation}
In particular, in the semi-random case we have $a\equiv 1$
and $c\equiv1/\int_F 2p(y) u(y)\,dy$
and the mating rate is given by  (\ref{mating-rd}).
Let us recall that in the general case $c$  is not only a function of  $x$ but 
it is also depends on $\mu$ and therefore,  the proofs 
of results from Sections~\ref{r_mod} and \ref{r_app}  cannot be automatically adopted to these 
models.

\end{document}